\newtheorem{theorem}{Theorem}[section]
\newtheorem{lemma}[theorem]{Lemma}
\newtheorem{proposition}[theorem]{Proposition}
\newtheorem{question}[theorem]{Question}
\theoremstyle{corollary}
\newtheorem{corollary}[theorem]{Corollary}
\theoremstyle{definition}     
\newtheorem{definition}[theorem]{Definition}
\newtheorem{example}[theorem]{Example}
\theoremstyle{remark}
\newtheorem{remark}[theorem]{Remark}
\numberwithin{equation}{section}
\newcommand{\C}{\mathbb{C}}
\newcommand{\N}{\mathbb{N}}
\newcommand{\Q}{\mathbb{Q}}
\def\P{\mathbb{P}}
\def\Proj{\operatorname{Proj}}
\def\mult{\operatorname{mult}}
\def\cha{\operatorname{char}}
\title[Redundant blow-ups of big anticanonical rational surfaces]{Redundant blow-ups of rational surfaces with big anticanonical divisor}
\begin{document}

\author{DongSeon Hwang}
\address{Department of Mathematics, Ajou University, Suwon, Korea}
\email{dshwang@ajou.ac.kr}

\author{Jinhyung Park}
\address{Department of Mathematical Sciences, KAIST, Daejeon, Korea}
\email{parkjh13@kaist.ac.kr}

\subjclass[2010]{Primary 14J26; Secondary 14J17}

\date{\today}

\keywords{redundant blow-up, normal surface singularity, rational surface with big anticanonical divisor, Zariski decomposition.}

\begin{abstract}
We completely classify redundant blow-ups appearing in the theory of rational surfaces with big anticanonical divisor due to Sakai. In particular, we construct a rational surface with big anticanonical divisor which is not a minimal resolution of a del Pezzo surface with only rational singularities, which gives a negative answer to a question raised in a paper by Testa, V\'{a}rilly-Alvarado, and Velasco.
\end{abstract}

\maketitle

\tableofcontents

\section{Introduction}
Throughout the paper, we work over an algebraically closed field $k$ of arbitrary characteristic. Sakai (\cite[Proposition 4.1 and Theorem 4.3]{Sak84}) proved that the anticanonical morphism $f \colon S \to \bar{S}$ of a big anticanonical rational surface, i.e., a smooth projective rational surface with big anticanonical divisor, factors through the minimal resolution of a del Pezzo surface with only rational singularities followed by a sequence of \emph{redundant blow-ups}, which will be defined below. However, the existence of redundant points was not known before. In the present paper, we show the existence of redundant points (Theorem \ref{reddisc}) by providing a systematic way of finding redundant points (Theorem \ref{redpt}).

Here, we briefly introduce the notion of redundant blow-up in general inspired by Sakai's work. Let $S$ be a smooth projective surface. Assume that $-K_S$ is pseudo-effective so that we have the Zariski decomposition $-K_S = P+N$. A point $p$ in $S$ is called a \emph{redundant point} if $\mult_p N \geq 1$. The blow-up $f \colon \widetilde{S} \to S$ at a redundant point $p$ is called a \emph{redundant blow-up}. For more detail, see Section \ref{setupsec}.

To classify redundant blow-ups, we only need to know the information of the redundant points on the surface $S$. In many natural situations, $S$ is a minimal resolution of a normal projective surface $\bar{S}$ with nef anticanonical divisor. It turns out that the position of redundant points on $S$ can be read off from the information of singularities on $\bar{S}$.

\begin{theorem}\label{redpt}
Let $\bar{S}$ be a normal projective rational surface with nef anticanonical divisor, and let $\widetilde{S}$ be a surface obtained by a sequence of redundant blow-ups from the minimal resolution $S$ of $\bar{S}$. Then, we have the following.
 \begin{enumerate}
  \item The number of surfaces obtained by a sequence of redundant blow-ups from $S$ is finite if and only if $\bar{S}$ contains at worst log terminal singularities.
  \item If $\bar{S}$ contains at worst log terminal singularities, then every redundant point on $\widetilde{S}$ lies on the intersection points of the two curves contracted by the morphism $h \colon \widetilde{S} \rightarrow \bar{S}$.
  \item If $\bar{S}$ contains a non-log terminal singularity, then there is a curve in $\widetilde{S}$ contracted by the morphism $h \colon \widetilde{S} \rightarrow \bar{S}$ such that every point lying on this curve is a redundant point.
 \end{enumerate}
\end{theorem}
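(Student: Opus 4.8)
The plan is to first pin down the Zariski decomposition of $-K_{\widetilde{S}}$ explicitly and then run all three parts off the resulting coefficient bookkeeping. Writing $h \colon \widetilde{S} \to \bar{S}$ with $h$-exceptional prime divisors $E_i$ and $K_{\widetilde{S}} = h^*K_{\bar{S}} + \sum_i a_i E_i$, I would note that $-K_{\widetilde{S}} = h^*(-K_{\bar{S}}) + \sum_i (-a_i)E_i$, where $h^*(-K_{\bar{S}})$ is nef, $h^*(-K_{\bar{S}})\cdot E_i = (-K_{\bar{S}})\cdot h_*E_i = 0$ for every $i$ by the projection formula, and the $E_i$ form a negative definite system. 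By uniqueness of Zariski decomposition this \emph{is} the decomposition, so $P = h^*(-K_{\bar{S}})$ and $N = \sum_i(-a_i)E_i$. Hence the coefficient of $E_i$ in $N$ equals $-a_i = 1 - \delta_i$, where $\delta_i := 1 + a_i$ is the defect; since $S$ is a minimal resolution all $a_i \le 0$, so every coefficient is $\ge 0$, and log terminality of $\bar{S}$ is exactly the condition that every $\delta_i > 0$, i.e. every coefficient is $<1$.

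The second step records how this data transforms under one redundant blow-up $f \colon \widetilde{S}' \to \widetilde{S}$ at a redundant point $p$. A direct computation with $K_{\widetilde{S}'} = f^*K_{\widetilde{S}} + E$ and $f^*E_i = \tilde{E}_i + (\mult_p E_i)E$ gives $N' = \tilde{N} + (\mult_p N - 1)E$: every strict transform keeps its coefficient, and the new curve $E$ acquires coefficient $\mult_p N - 1 \ge 0$, equivalently discrepancy $a_E = 1 - \mult_p N$. In particular, when $p = E_j \cap E_k$ is a transverse intersection the defects add, $\delta_E = \delta_j + \delta_k$, and $p$ is redundant precisely when $\delta_j + \delta_k \le 1$, equivalently when the created divisor has discrepancy $\le 0$. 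Parts (2) and (3) then follow quickly. If $\bar{S}$ is log terminal all $\delta_i > 0$, so by defect-addition every curve ever created has positive defect, hence coefficient $<1$; as the components of $N$ are smooth, $\mult_p N \ge 1$ forces at least two of them through $p$, giving (2). If $\bar{S}$ is not log terminal some $a_i \le -1$, so some component $C$ of $N$ has coefficient $\ge 1$; its strict transform retains this coefficient under every redundant blow-up, whence $\mult_p N \ge c_C \ge 1$ at every point of it, and that curve is $h$-contracted, giving (3).

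Part (1) is the heart. The easy direction uses (3): a component with coefficient $\ge 1$ has every point redundant, so one may blow up infinitely many distinct general points of its successive strict transforms, producing infinitely many surfaces. For the converse I would argue that when $\bar{S}$ is log terminal the process terminates. The geometric picture is edge subdivision of the (SNC) dual graph of the minimal resolution: blowing up $E_j \cap E_k$ replaces the edge by $E_j - E - E_k$, the new vertex carrying defect $\delta_j + \delta_k \le 1$, strictly larger than each parent defect. All initial defects are rational with a common denominator $D$, so every defect lies in $\tfrac1D\mathbb{Z}\cap(0,1]$; along any chain of successive subdivisions the defects strictly increase while staying $\le 1$, and on a single original edge with endpoint numerators $p,q$ the admissible new vertices correspond to lattice points $(m,n)$ with $mp + nq \le D$, of which there are finitely many. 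Summing these finite counts over the finitely many initial edges bounds the total number of redundant divisors over $S$; since each $\widetilde{S}$ is obtained by blowing up a subtower of these, only finitely many surfaces arise.

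The main obstacle is making this termination argument uniform and rigorous: controlling the SNC structure and rationality of defects for log terminal singularities in arbitrary characteristic, and verifying that no redundant point escapes the finite list generated by edge subdivision. The key point to check carefully is that a component of coefficient $0$ is inert—at any intersection with it one has $\mult_p N < 1$, so it spawns no further redundant blow-ups—which is exactly what prevents the subdivision on each edge from continuing past the bound $mp + nq \le D$.
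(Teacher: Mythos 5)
Your proposal is correct, and most of it runs parallel to the paper: the identification $P=h^{*}(-K_{\bar{S}})$, $N=\sum_i(-a_i)E_i$ is the paper's Lemma \ref{zar}, the transformation rule $N'=\widetilde{N}+(\mult_pN-1)E$ is the displayed formula after it, and your arguments for parts (2) and (3) (coefficients $<1$ on smooth snc components force two branches through a redundant point; a coefficient $\geq 1$ component stays redundant along all of its strict transforms) are Lemmas \ref{lt} and \ref{rtsing}. Where you genuinely diverge is the termination half of part (1). The paper (Lemma \ref{length}, Corollary \ref{bdm}) shows that along any nested chain of redundant blow-ups the multiplicity strictly decreases while the successive decrements $1-a_j$, $2-a_j-a_k$ are non-decreasing, which bounds the length of every chain by an explicit invariant $M(p)$; combined with the finiteness of redundant points at each stage this gives finiteness of the tree of surfaces. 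You instead clear denominators and run a mediant (Stern--Brocot) count: defects add under edge subdivision, so every curve ever insertable over an original edge with endpoint defect numerators $u,v$ corresponds to a lattice point $(m,n)$ with $mu+nv\leq D$, giving a uniform bound on the total number of redundant divisors over $S$ and hence on the number of surfaces, each being determined by the finite set of divisors it extracts. Your route buys a single global bound on all redundant divisors over $S$ at the cost of the rationality/common-denominator bookkeeping; the paper's route avoids that bookkeeping and yields the per-point chain-length bound $M(p)$. The one point you flagged as delicate --- that discrepancies of log terminal singularities are rational, that the exceptional configuration is snc with smooth components in arbitrary characteristic, and that coefficient-$0$ components are inert --- is exactly what the paper's cited facts about rational singularities supply, so there is no gap.
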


Moreover, the existence of the redundant points can also be determined from the singularity types of $\bar{S}$.

\begin{theorem}\label{reddisc}
Let $\bar{S}$ be a normal projective rational surface with nef anticanonical divisor, and let $g \colon S \rightarrow \bar{S}$ be its minimal resolution. Then, $S$ has no redundant point if and only if $\bar{S}$ contains at worst canonical singularities or log terminal singularities whose dual graphs are as follows:

\begin{tikzpicture}[line cap=round,line join=round,>=triangle 45,x=1.0cm,y=1.0cm]
\clip(-4,2.8) rectangle (9,4.2);
\draw (-3.5,3.96)-- (-2.8,3.96);
\draw (-1.57,3.96)-- (-0.87,3.96);
\draw (-4,3.82) node[anchor=north west] {$-\underbrace{2\text{  }-2 \text{   }\text{   }\text{   }\text{   }\text{   } -2}_{\text{ } \text{ } \text{ } \text{ } \text{ } \alpha \text{ } (\alpha \geq 1)}$};
\draw (-1.32, 3.82) node[anchor=north west] {$-3$};
\draw (-2.5,4.12) node[anchor=north west] {$\cdots$};
\draw (0.2,3.96)-- (2.3,3.96);
\draw (-0.25,3.82) node[anchor=north west] {$-2$};
\draw (0.45,3.82) node[anchor=north west] {$-2$};
\draw (1.15,3.82) node[anchor=north west] {$-3$};
\draw (1.85,3.82) node[anchor=north west] {$-2$};
\draw (3.37,3.96)-- (4.77,3.96);
\draw (2.92,3.82) node[anchor=north west] {$-2$};
\draw (3.62,3.82) node[anchor=north west] {$-3$};
\draw (4.32,3.82) node[anchor=north west] {$-2$};
\draw (5.84,3.96)-- (6.54,3.96);
\draw (5.39,3.82) node[anchor=north west] {$-2$};
\draw (6.09,3.82) node[anchor=north west] {$-4$};
\draw (7.16,3.86) node[anchor=north west] {$-n$};
\draw (6.9,3.5) node[anchor=north west] {$(n \geq 3)$};
\begin{scriptsize}
\fill [color=black] (-3.5,3.96) circle (2.5pt);
\fill [color=black] (-2.8,3.96) circle (2.5pt);
\fill [color=black] (-1.57,3.96) circle (2.5pt);
\fill [color=black] (-0.87,3.96) circle (2.5pt);
\fill [color=black] (0.2,3.96) circle (2.5pt);
\fill [color=black] (0.9,3.96) circle (2.5pt);
\fill [color=black] (1.6,3.96) circle (2.5pt);
\fill [color=black] (2.3,3.96) circle (2.5pt);
\fill [color=black] (3.37,3.96) circle (2.5pt);
\fill [color=black] (4.07,3.96) circle (2.5pt);
\fill [color=black] (4.77,3.96) circle (2.5pt);
\fill [color=black] (5.84,3.96) circle (2.5pt);
\fill [color=black] (6.54,3.96) circle (2.5pt);
\fill [color=black] (7.61,3.96) circle (2.5pt);
\end{scriptsize}
\end{tikzpicture}
\end{theorem}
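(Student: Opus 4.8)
The plan is to reinterpret the statement as a purely numerical condition on the discrepancies of the minimal resolution and then to read it off from the classification of two-dimensional log terminal singularities.

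First I would pin down the Zariski decomposition of $-K_S$. Writing $K_S = g^*K_{\bar S} + \sum_i a_i E_i$ with $E_i$ the $g$-exceptional curves, minimality of the resolution forces $a_i \le 0$, while nefness of $-K_{\bar S}$ makes $g^*(-K_{\bar S})$ nef and numerically trivial on every $E_i$. Thus $-K_S = g^*(-K_{\bar S}) + \sum_i (-a_i)E_i$ satisfies all the defining properties of the Zariski decomposition (nef positive part; effective negative part supported on the negative-definite exceptional locus; orthogonality of the two parts), so by uniqueness $P = g^*(-K_{\bar S})$ and $N = \sum_i(-a_i)E_i$. In particular $\operatorname{Supp}N \subseteq \operatorname{Exc}(g)$, so a redundant point must lie on the exceptional locus; and when $\bar S$ is log terminal the $E_i$ are smooth rational curves with normal crossings, whence $\mult_p N = -a_i$ at a general point of $E_i$ and $\mult_p N = -(a_i+a_j)$ at a node $E_i\cap E_j$.

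This reduces the theorem to a statement about the weighted dual graph. If $\bar S$ is not log terminal then some $a_i \le -1$, so every point of $E_i$ is redundant (this is Theorem~\ref{redpt}(3)); hence ``no redundant point'' forces log terminality, under which $-a_i < 1$ automatically and only nodes can be redundant. Therefore $S$ has no redundant point if and only if $\bar S$ is log terminal and
\begin{equation*}
a_i + a_j > -1 \quad \text{for every edge $E_i$--$E_j$ of the dual graph.} \tag{$\ast$}
\end{equation*}
Two observations streamline the rest. First, $(\ast)$ is intrinsic: blowing up the node $E_i\cap E_j$ extracts a divisor of discrepancy $a_i+a_j+1$ over $\bar S$, so $(\ast)$ says every such divisor has positive discrepancy; in particular a canonical (Du Val) singularity, where all $a_i=0$, always satisfies $(\ast)$. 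Second, the whole analysis is local over each singular point, so it suffices to decide, for each log terminal dual graph, whether $(\ast)$ holds. For a fixed graph the discrepancies are found by solving the linear system $\sum_j a_j(E_i\cdot E_j) = K_S\cdot E_i = -E_i^2-2$, whose matrix is the (negative-definite) intersection form.

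It then remains to run through the classification, whose minimal resolutions are either Hirzebruch--Jung strings (the cyclic quotient singularities) or trees with a single trivalent central vertex carrying three chains. In the cyclic case I would compute the discrepancies along the chain and organize the verification of $(\ast)$ by the position and self-intersection of the curves with $E_i^2\le -3$; this shows $(\ast)$ survives exactly for the five families listed, while chains such as $-2,-5$, $-3,-3$, or $-2,-3,-3$ already fail $(\ast)$ at a node. The main obstacle is the non-cyclic case, which includes the infinite dihedral family and so cannot be dispatched by a finite check: here one must show that a trivalent vertex together with at least one curve of self-intersection $\le -3$ always produces a redundant node. For this I would exploit the central relation $a_1+a_2+a_3 = b_0(1+a_0)-2$ (with $-E_0^2=b_0$ and $E_1,E_2,E_3$ the neighbours of the center) together with the affine dependence of each neighbour's discrepancy on $a_0$ along its arm, to locate an edge at which $(\ast)$ fails whenever the graph is neither Du Val nor one of the five chains---thereby completing the ``only if'' direction.
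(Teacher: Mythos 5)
Your reductions are exactly the paper's: the identification $P=g^{*}(-K_{\bar S})$, $N=\sum_i(-a_i)E_i$ is Lemma \ref{zar}; the dichotomy ``non--log terminal $\Rightarrow$ a whole exceptional curve of redundant points'' versus ``log terminal $\Rightarrow$ only nodes with $a_i+a_j\le -1$ can be redundant'' is Lemmas \ref{rtsing} and \ref{lt}; and the localization to one singularity at a time, followed by a march through Brieskorn's list, is precisely how Section \ref{discsec} is organized. So the route is the same. The problem is that everything after the reduction --- which is the actual content of the theorem --- is asserted rather than proved, and one genuinely non-obvious ingredient is missing. For the cyclic ($A_{q,q_1}$) case you must decide condition $(\ast)$ for \emph{all} Hirzebruch--Jung strings $[n_1,\dots,n_l]$, an infinite family in both length and weights. ``Organize by the position and self-intersection of the curves with $E_i^2\le-3$'' does not by itself terminate: the discrepancies are global functions of the whole string, so knowing that $[3,3]$ or $[2,5]$ fails $(\ast)$ says nothing a priori about $[3,2,2,2,3]$ or $[2,2,6]$. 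The paper needs two things here: explicit continued-fraction computations for the minimal bad patterns $[2^\alpha,3,2^\beta]$, $[2^\alpha,3,2^\beta,3,2^\gamma]$, $[2^\alpha,4,2^\beta]$ (Lemma \ref{A1}, which is also where the exceptional cases $[2,3,2]$, $[2,2,3,2]$, $[2,4]$ drop out of an inequality like $(\alpha-1)\beta-2<0$), \emph{and} a monotonicity lemma (Lemma \ref{A2}) showing that increasing any $n_k$ preserves the failure of $(\ast)$ at a fixed node. Your sketch contains neither the computation nor any substitute for the monotonicity step, and the latter is the idea that makes the infinite check finite.

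The non-cyclic case has the same issue in milder form. Your central relation $a_1+a_2+a_3=b_0(1+a_0)-2$ is correct, but on its own it does not locate a bad edge: e.g.\ for $D$-type with $b_0=3$ it only gives $a_0+a_1<1$-type bounds in the wrong direction, and one really needs the quantitative dependence of the arm discrepancies on $a_0$ (the paper short-circuits this with the closed formula $a_0=1-\tfrac{1}{(b-1)q-q_1}$, $a_{l+1}=\tfrac12 a_0$ of Lemma \ref{lemD}, which instantly gives $a_0+a_{l+1}\ge1$ unless $(b-1)q-q_1\le 2$, i.e.\ unless the singularity is canonical or $[2,\dots,2,3]$-tailed with $b=2$ --- and even that last case still yields $a_0=a_1=\tfrac12$, hence a redundant node). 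The $T_m,O_m,I_m$ graphs are a finite list in $b$ and can be tabulated, as the paper does. None of this suggests your plan would \emph{fail}, but as written the proof of the ``only if'' direction --- and of the ``if'' direction for the infinite family $[2,\dots,2,3]$ --- is not there: you would need to supply Lemma \ref{A1}-type computations, a monotonicity statement playing the role of Lemma \ref{A2}, and a worked-out argument (or closed formula) for the $D$-type arms.
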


It was shown that every big anticanonical rational surface has a finitely generated Cox ring in \cite[Theorem 1]{TVV10}, \cite[Theorem 3]{CS08}, and the following question was raised in \cite{TVV10}.

\begin{question}[{\cite[Remark 3]{TVV10}}]\label{tvvq}
Is every big anticanonical rational surface the minimal resolution of a del Pezzo surface with only rational singularities?
\end{question}

We give a negative answer to this question by explicitly constructing examples of redundant blow-ups (see Subsection \ref{exredsubsec}).

\begin{theorem}\label{nomin}
For each $n \geq 10$, there exists a big anticanonical rational surface of Picard number $n$ which is not a minimal resolution of a del Pezzo surface with only rational singularities.
\end{theorem}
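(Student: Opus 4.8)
The plan is to realize the counterexamples as redundant blow-ups over minimal resolutions of suitably singular del Pezzo surfaces, and to exclude the del Pezzo description by exploiting the fact that a redundant blow-up leaves the anticanonical model unchanged. First I would produce a normal del Pezzo surface $\bar{S}$ (with ample $\mathbb{Q}$-Cartier anticanonical divisor) having only rational singularities, at least one of which is \emph{not} among the types listed in Theorem \ref{reddisc}; by that theorem the minimal resolution $g\colon S\to\bar{S}$ then carries a redundant point. Since $-K_{\bar{S}}$ is ample, $-K_S=g^*(-K_{\bar{S}})+\sum(-a_i)E_i$ is big, so $S$ is a big anticanonical rational surface, and I may perform a redundant blow-up $f\colon\widetilde{S}\to S$ and set $h=g\circ f\colon\widetilde{S}\to\bar{S}$.

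The heart of the argument is the behaviour of the Zariski decomposition. Writing $-K_S=P+N$ and choosing a redundant point $p$ with $\mult_p N\geq 1$, one has $f^*N=\widetilde{N}+(\mult_p N)E$, so $-K_{\widetilde{S}}=f^*P+(f^*N-E)$ where $f^*N-E$ has $E$-coefficient $\mult_p N-1\geq 0$ and is therefore effective. I would check this is again a Zariski decomposition: its support is negative definite, while $f^*P\cdot E=P\cdot f_*E=0$ and $f^*P\cdot\widetilde{C}=P\cdot C=0$ for every component $C$ of $N$. Hence the positive part of $\widetilde{S}$ is $f^*P=h^*(-K_{\bar{S}})$, which is big and nef; thus $\widetilde{S}$ is a big anticanonical rational surface and its anticanonical model is again $\bar{S}$. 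The same computation applies verbatim at each further blow-up, so after any number of redundant blow-ups the anticanonical model remains $\bar{S}$.

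To conclude that $\widetilde{S}$ is not a minimal resolution of a del Pezzo surface with rational singularities, I argue by contradiction: suppose $g'\colon\widetilde{S}\to Z$ were such a minimal resolution. For a minimal resolution the discrepancies satisfy $a_i\leq 0$ (as $-M=-(E_i\cdot E_j)$ is a symmetric $M$-matrix with nonnegative inverse and $K_{\widetilde{S}}\cdot E_i\geq 0$), so $-K_{\widetilde{S}}=(g')^*(-K_Z)+\sum(-a_i)E_i$ with $(g')^*(-K_Z)$ nef and big, meeting each $E_i$ in zero, and the $E_i$ negative definite. By uniqueness of the Zariski decomposition the positive part is $(g')^*(-K_Z)$, so $Z$ is the anticanonical model of $\widetilde{S}$, forcing $Z=\bar{S}$ and making $h\colon\widetilde{S}\to\bar{S}$ the minimal resolution. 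But $h$ factors through the minimal resolution $S$ via the nontrivial blow-up $f$, so $h$ is not minimal, a contradiction.

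Finally, for the Picard numbers I would arrange that the chosen singularity of $\bar{S}$ is non-log terminal and that $\rho(S)\leq 9$. Then Theorem \ref{redpt}(1) guarantees that infinitely many redundant blow-ups are available, and Theorem \ref{redpt}(3) even exhibits an entire curve of redundant points, so for each $n\geq 10$ I may perform $n-\rho(S)\geq 1$ successive redundant blow-ups to produce a big anticanonical rational surface $\widetilde{S}$ with $\rho(\widetilde{S})=n$, which by the above is not a minimal resolution of a del Pezzo surface with rational singularities. The main obstacle is the explicit construction of the first paragraph: one must write down a genuine del Pezzo surface carrying a rational, non-log terminal singularity whose exceptional locus is small enough to keep $\rho(S)\leq 9$, and verifying ampleness of $-K_{\bar{S}}$ together with rationality of the singularity is where the real work lies. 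I would carry this out through the explicit examples of Subsection \ref{exredsubsec}.
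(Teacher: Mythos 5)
Your middle two paragraphs are sound and coincide with the paper's own reasoning in Subsection \ref{exredsubsec}: a redundant blow-up preserves the anticanonical model (Lemma \ref{redlem}), and a surface that is both a nontrivial redundant blow-up of $S$ and a minimal resolution of a del Pezzo surface with rational singularities would force the anticanonical morphism $h$ to be a minimal resolution, which is impossible since $h$ contracts the $(-1)$-curve $E$. The gap is in the construction. You reduce the whole theorem to exhibiting a del Pezzo surface $\bar{S}$ with a rational, non-log terminal singularity whose minimal resolution has Picard number at most $9$, and you propose to obtain it from ``the explicit examples of Subsection \ref{exredsubsec}''. Those examples do not supply such a surface. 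Example \ref{ltex} has only log terminal singularities, so by Theorem \ref{redpt}(1) only finitely many redundant blow-ups are available and your plan of performing $n-\rho(S)$ successive blow-ups for every $n\geq 10$ collapses. Example \ref{ctex2} does have a non-log terminal singularity, but its minimal resolution is a blow-up of $\mathbb{F}_n$ at $\sum a_i$ points with $\sum 1/a_i<k-2$ and $k\geq 3$, so its Picard number $2+\sum a_i$ is always at least $11$ (the minimum occurs at $k=4$, $(a_1,\ldots,a_4)=(2,2,2,3)$); iterating redundant blow-ups from there reaches only Picard numbers $\geq 12$ and misses $n=10,11$. Whether a del Pezzo surface with a rational non-log terminal singularity and $\rho(S)\leq 9$ exists at all is a nontrivial question you have left unaddressed, and it is precisely where your proof stops being a proof.

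The paper sidesteps this by controlling the Picard number through the input data rather than through iteration: in Example \ref{ltex}, $S(m,n')$ is $\P^2$ blown up at $m$ points on one line and $n'$ points on another with $m\geq n'\geq 4$, the intersection point of the two strict transforms is redundant because the two coefficients of $N$ sum to at least $1$ exactly when $(m-2)(n'-2)\geq 4$, and a \emph{single} redundant blow-up produces $\widetilde{S}(m,n')$ of Picard number $m+n'+2$. Taking $n'=4$ and letting $m$ vary realizes every value $n\geq 10$. If you want to keep your iteration strategy you must actually construct the missing small non-log terminal example; otherwise you should switch to varying the number of blown-up points in a log terminal example, as the paper does.
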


The remainder of this paper is organized as follows. We first define redundant blow-ups in Section \ref{setupsec}. Then, Section \ref{redptsec} is devoted to the investigation of redundant points with respect to a given singularity, which leads us to the proof of Theorem \ref{redpt}. In Section \ref{discsec}, we prove Theorem \ref{reddisc} by calculating discrepancies. Finally, in Section \ref{exredsec}, we construct examples of redundant blow-ups. In particular, we prove Theorem \ref{nomin} in Subsection \ref{exredsubsec}.

\subsection*{Acknowledgements}
The authors would like to thank Damiano Testa and Ivan Cheltsov for useful comments.

\section{Basic setup}\label{setupsec}

In this section, we define redundant blow-ups. Let $S$ be a smooth projective surface, and let $D$ be a $\Q$-divisor. The \emph{Iitaka dimension} of $D$ is given by
$$\kappa(D) := \max \{\dim \varphi_{|-nD| (S)} : n \in \N\},$$
whose value is one of $2,1,0,$ and $-\infty$.

We will frequently use the notion of the \emph{Zariski decomposition} of a pseudo-effective $\Q$-divisor $D$ (see \cite[Section 2]{Sak84} for details): $D$ can be written uniquely as $P+N$, where $P$ is a nef $\Q$-divisor, $N$ is an effective $\Q$-divisor, $P.N=0$, and the intersection matrix of the irreducible components of $N$ is negative definite if $N \neq 0$.

Write an effective $\Q$-divisor $D=\sum_{i=1}^{n} \alpha_i E_i$, where $E_i$ is a prime divisor for all $1 \leq i \leq n$. The \emph{multiplicity} of $D$ at a point $p$ in $S$ is defined by $\mult_{p} D := \sum_{i=1}^n \alpha_i \mult_{p} E_i$, where $\mult_{p} E_i$ denotes the usual multiplicity of $E_i$ at $p$.

In the remainder of this subsection, we use the following notations. Let $S$ be a smooth projective rational surface with $\kappa(-K_S) \geq 0$, and let $-K_S = P + N$ be the Zariski decomposition. Let $f \colon \widetilde{S} \to S$ be a blow-up at a point $p$ in $S$ with the exceptional divisor $E$.

\begin{definition}\label{reddef}
A point $p$ is called \emph{redundant} if $\mult_p N \geq 1$. The blow-up $f \colon \widetilde{S} \rightarrow S$ at a redundant point $p$ is called a \emph{redundant blow-up}, and the exceptional curve $E$ is called a \emph{redundant curve}.
\end{definition}

Note that we always have $\kappa(-K_{\widetilde{S}})  \leq \kappa(-K_S)$ in general. If $f$ is a redundant blow-up, then $\kappa(-K_{\widetilde{S}}) \geq 0$ by \cite[Lemma 6.9]{Sak84}.

Now, we reformulate Sakai's result on redundant blow-ups. This will play a key role throughout the paper.

\begin{lemma}[{\cite[Corollary 6.7]{Sak84}}]\label{redlem}
Assume that $\kappa(-K_{\widetilde{S}}) \geq 0$ so that we have the Zariski decomposition  $-K_{\widetilde{S}}=\widetilde{P}+\widetilde{N}$. Then, the following are equivalent:
\begin{enumerate}
 \item $f$ is a redundant blow-up.
 \item $\widetilde{P}= f^{*}P$ and $\widetilde{N}= f^{*}N - E$.
\end{enumerate}
\end{lemma}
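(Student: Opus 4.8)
The plan is to reduce everything to the uniqueness of the Zariski decomposition on $\widetilde{S}$. The starting point is the ramification identity $K_{\widetilde{S}} = f^{*}K_S + E$, which rewrites the anticanonical class as
\[
-K_{\widetilde{S}} = f^{*}(-K_S) - E = f^{*}P + \left( f^{*}N - E \right).
\]
Writing $N = \sum_i \alpha_i E_i$ with each $E_i$ prime and setting $m_i := \mult_p E_i$, the standard blow-up formula $f^{*}E_i = \widetilde{E_i} + m_i E$, where $\widetilde{E_i}$ denotes the strict transform, gives
\[
f^{*}N - E = \sum_i \alpha_i \widetilde{E_i} + (\mult_p N - 1) E.
\]
This already yields $(2) \Rightarrow (1)$: if $\widetilde{N} = f^{*}N - E$, then, being the negative part of a Zariski decomposition, it is effective, and reading off the coefficient of $E$ in the displayed expression forces $\mult_p N - 1 \geq 0$, i.e. $p$ is redundant.

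For $(1) \Rightarrow (2)$ I would verify that $f^{*}P + (f^{*}N - E)$ satisfies the four defining properties of the Zariski decomposition and then invoke uniqueness to conclude $\widetilde{P} = f^{*}P$ and $\widetilde{N} = f^{*}N - E$. The divisor $f^{*}P$ is nef since the pullback of a nef divisor is nef. The divisor $f^{*}N - E$ is effective precisely because $\mult_p N \geq 1$ makes the coefficient of $E$ nonnegative, all other coefficients being the $\alpha_i \geq 0$; note also that $\mult_p N \geq 1$ forces $N \neq 0$. Orthogonality follows from the projection formula together with $f^{*}P \cdot E = 0$, namely
\[
f^{*}P \cdot (f^{*}N - E) = P \cdot N - f^{*}P \cdot E = 0.
\]

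The remaining, and main, point is the negative definiteness of the intersection matrix of the components of $f^{*}N - E$. The clean way to see this is to observe that $\Pic(\widetilde{S})_{\Q}$ contains the subspace
\[
W := f^{*}\!\left( \langle E_1, \dots, E_r \rangle_{\Q} \right) \oplus \Q E,
\]
and that this sum is orthogonal because $f^{*}D \cdot E = 0$ for every divisor $D$ on $S$. On the first summand the intersection form is the pullback of the form on $\langle E_1, \dots, E_r \rangle_{\Q}$, which is negative definite since it is the intersection matrix of the components of $N \neq 0$; on the second summand $E^{2} = -1 < 0$. Hence $W$ is negative definite. Since each $\widetilde{E_i} = f^{*}E_i - m_i E$ lies in $W$, and $E \in W$, every component of $f^{*}N - E$ lies in $W$; moreover the set $\{\widetilde{E_1}, \dots, \widetilde{E_r}, E\}$ is linearly independent, the passage from $\{f^{*}E_1, \dots, f^{*}E_r, E\}$ being unipotent, so the Gram matrix of the components is negative definite. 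This completes the verification, and uniqueness of the Zariski decomposition gives $(1) \Rightarrow (2)$. I expect this negative-definiteness step to be the only place requiring genuine care, since nefness, effectiveness, and orthogonality are immediate; the orthogonal splitting of $\Pic(\widetilde{S})_{\Q}$ induced by the blow-up is exactly what makes it fall out.
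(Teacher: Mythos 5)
Your proof is correct. There is nothing in the paper to compare it against: the lemma is quoted verbatim from Sakai (\cite[Corollary 6.7]{Sak84}) and no proof is given in the text, so your self-contained verification is a genuine addition rather than a variant of the paper's argument. The reduction to uniqueness of the Zariski decomposition is the natural route, and each of the four defining properties is checked soundly; you correctly isolate the negative-definiteness of the components of $f^{*}N-E$ as the only delicate point, and the orthogonal splitting $f^{*}\langle E_1,\dots,E_r\rangle_{\Q}\oplus \Q E$ together with the unipotent change of basis to the strict transforms settles it (this also covers the boundary case $\mult_p N=1$, where $E$ simply drops out of the support). The converse direction, reading off the coefficient $\mult_p N-1$ of $E$ from effectivity of $\widetilde{N}$, is likewise complete.
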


\begin{remark}\label{redrem}
If $-K_{\widetilde{S}}$ is big, then $\widetilde{P}.E=0$ if and only if $f$ is a redundant blow-up by Lemma \ref{redlem}. Thus, our definition of redundant curves coincides with that of Sakai (\cite[Definition 4.1]{Sak84}).
\end{remark}

\section{Finding redundant points}\label{redptsec}

In this section, we focus on redundant points, and we prove Theorem \ref{redpt} at the end. In what follows, $\bar{S}$ denotes a normal projective rational surface such that $-K_{\bar{S}}$ is nef, and $g \colon S \rightarrow \bar{S}$ denotes the minimal resolution.

First, we recall some basic facts concerning normal singularities on surfaces. Let $(\bar{S}, s)$ be a germ of a normal surface singularity, and let $g \colon S \rightarrow \bar{S}$ be the minimal resolution. Denote the exceptional set by $A=\pi^{-1}(s)=E_1 \cup \cdots \cup E_l$, where each $E_i$ is an irreducible component. Note that $E_i^2 = -n_i \leq -1$, where each $n_i$ is an integer for $i=1,\ldots,l$.
Now, we have
$$-K_S = g^{*}(-K_{\bar{S}}) + \sum_{i=1}^{l} a_i E_i ,$$
where each $a_i$ is a nonnegative rational number for all $i=1,\ldots,l$. Here, we call $a_i$ the \emph{discrepancy} of $E_i$ with respect to $\bar{S}$ for convenience in computation, even though $-a_i$ is called the discrepancy in the literature. By the adjunction formula, this number can be obtained by the simultaneous linear equations:
$$\sum_{j=1}^{l}a_j E_j E_i = -K_{S}.E_i = -n_i + 2 \hbox{ for } i=1,\ldots,l.$$
Thus, each discrepancy $a_i$ can be calculated by the following matrix equation
\begin{displaymath}
\left( \begin{array}{cccc}
-n_1 & E_2 E_1 & \cdots & E_l E_1 \\
E_2 E_1 &-n_2 & \cdots & E_l E_2 \\
\vdots & \vdots & \ddots & \vdots \\
E_l E_1 & E_l E_2 & \cdots & -n_l
\end{array} \right)
\left( \begin{array}{c} a_1 \\ a_2 \\\vdots \\ a_l \end{array} \right) =
-\left( \begin{array}{c} n_1 - 2  \\ n_2 - 2  \\\vdots \\ n_l - 2  \end{array} \right).
\end{displaymath}

If an algebraic surface contains at worst rational singularities, then its singularities are isolated and the surface is $\Q$-factorial (\cite[Theorem 4.6]{B01}), and the exceptional set $A$ consists of smooth rational curves with a simple normal crossing (\emph{snc} for short) support. Throughout this paper, we adopt the following standard terminologies.

\begin{definition}\label{sing}
(1) A singularity $s$ is called \emph{canonical} if $a_i = 0 $ for all $i=1,\ldots,l$.\\
(2) A singularity $s$ is called \emph{log terminal} if $0 \leq a_i < 1$ for all $i=1,\ldots,l$.
\end{definition}

We note that every log terminal singularity is rational (\cite[Theorem 4.12]{KM98}). When $\cha(k)=0$, a log terminal singularity is nothing but a quotient singularity (\cite[Proposition 4.18]{KM98}).

We can get the Zariski decomposition of $-K_S$ immediately by the assumption that $-K_{\bar{S}}$ is nef.

\begin{lemma}\label{zar}
Let $P=g^{*}(-K_{\bar{S}})$ and $N=\sum_{i=1}^{l} a_i E_i$, where each $E_i$ is a $g$-exceptional curve and each $a_i$
is the discrepancy of $E_i$ with respect to $\bar{S}$  for $1 \leq i \leq l$. Then, $-K_S = P+N$ is the Zariski decomposition.
\end{lemma}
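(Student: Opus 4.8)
The plan is to verify directly that the proposed decomposition satisfies the four defining properties of the Zariski decomposition recalled in Section~\ref{setupsec} --- $P$ nef, $N$ effective, $P.N = 0$, and the intersection matrix of the components of $N$ negative definite when $N \neq 0$ --- and then to invoke the uniqueness of the Zariski decomposition to conclude. Two of these properties are essentially immediate: since $-K_{\bar S}$ is nef by hypothesis and the pullback of a nef divisor under a proper morphism is again nef, $P = g^{*}(-K_{\bar S})$ is nef; and $N = \sum_{i=1}^{l} a_i E_i$ is effective because each discrepancy $a_i$ is nonnegative, as recorded at the start of this section for the minimal resolution of a normal surface.

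For the orthogonality $P.N = 0$, I would use the projection formula together with the fact that each $E_i$ is $g$-exceptional. Since $g_{*}E_i = 0$, we get $P.E_i = g^{*}(-K_{\bar S}).E_i = (-K_{\bar S}).g_{*}E_i = 0$ for every $i$, and summing yields $P.N = \sum_{i=1}^{l} a_i (P.E_i) = 0$.

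The one substantive point is the negative definiteness of the intersection matrix of the components of $N$ in the case $N \neq 0$. Here I would appeal to the classical fact that the intersection matrix of the full exceptional set $E_1 \cup \cdots \cup E_l$ of a resolution of a normal surface singularity is negative definite. The components of $N$ are precisely the $E_i$ with $a_i > 0$, so their intersection matrix is a principal submatrix of this negative definite matrix; since every principal submatrix of a negative definite matrix is again negative definite, the required property follows.

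Having checked all four defining conditions, I would conclude that $-K_S = P + N$ is the Zariski decomposition by uniqueness. I do not anticipate any real obstacle: each step is either a hypothesis, the projection formula, or the classical negative definiteness of exceptional loci. The only mild care needed is the passage from the negative definiteness of the entire exceptional set to that of the support of $N$, which the principal-submatrix observation handles cleanly.
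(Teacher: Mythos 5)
Your proof is correct and follows essentially the same route as the paper's: the paper also verifies the defining properties of the Zariski decomposition directly, noting that the pull-back of a nef divisor is nef, that $P.N=0$, and that the intersection matrix of the components of $N$ is negative definite. You simply supply the standard details (projection formula, nonnegativity of the $a_i$, and the principal-submatrix argument from the negative definiteness of the full exceptional set) that the paper leaves implicit.
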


\begin{proof}
The pull-back of a nef divisor is again nef. Clearly $P.N=0$ and the intersection matrix of irreducible components of $N$ is negative definite.
\end{proof}

Suppose that $S$ contains a redundant point $p$. Let $f \colon  \widetilde{S} \rightarrow S$ be the redundant blow-up at $p$ with the exceptional divisor $E$, and let $-K_S = P+N$ and $-K_{\widetilde{S}} = \widetilde{P} + \widetilde{N}$ be the Zariski decompositions. Then, we obtain
$$\widetilde{N} = \sum_{i=1}^{l} a_i \widetilde{E}_i + (\mult_p N-1) E= \sum_{i=1}^{l} a_i \widetilde{E}_i + \left( \sum_{p \in E_j} a_j  -1\right) E,$$
where $\widetilde{E}_i$ is the strict transform of $E_i$ for each $1 \leq i \leq l$. We note that $\sum_{p \in E_j} a_j  \geq 1$,  since $\mult_p N \geq 1$.

\begin{remark}
If $N$ has an snc support, then so does $\widetilde{N}$. Thus, the negative part of the Zariski decomposition of the anticanonical divisor of a big anticanonical rational surface also has an snc support. This is no longer true for rational surfaces with anticanonical Iitaka dimension $1$ or $0$ (see Example \ref{0redex}).
\end{remark}

In the remainder of this section, we will completely determine the location of redundant points on $S$ in terms of the singularities of $\bar{S}$.

\subsection{$\bar{S}$ contains at worst canonical singularities}

\begin{lemma}\label{can}
If $\bar{S}$ has at worst canonical singularities, then $S$ has no redundant points.
\end{lemma}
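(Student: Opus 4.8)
The plan is to identify the negative part $N$ of the Zariski decomposition of $-K_S$ and show that it is the zero divisor, after which the absence of redundant points follows directly from the definition. First I would invoke Lemma \ref{zar}, which gives the Zariski decomposition $-K_S = P + N$ explicitly, with $P = g^{*}(-K_{\bar{S}})$ and $N = \sum_{i=1}^{l} a_i E_i$; here the $E_i$ are the $g$-exceptional curves and each $a_i$ is the discrepancy of $E_i$ with respect to $\bar{S}$.

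Next I would use the hypothesis that $\bar{S}$ has at worst canonical singularities. By Definition \ref{sing}, this means $a_i = 0$ for every exceptional curve $E_i$ lying over a singular point, and hence $N = \sum_{i=1}^{l} a_i E_i = 0$. It is worth noting that $N$ aggregates the exceptional contributions of all singular points of $\bar{S}$ at once, so the assumption that \emph{every} singularity is canonical is precisely what is needed to force $N = 0$ globally rather than merely over a single germ.

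Finally, with $N = 0$, the conclusion is immediate: for every point $p \in S$ we have $\mult_{p} N = 0$, which is strictly less than $1$, so no point satisfies the condition $\mult_{p} N \geq 1$ of Definition \ref{reddef}. Therefore $S$ has no redundant points. I do not anticipate any real obstacle in this argument; the only substantive input is the characterization of the negative part supplied by Lemma \ref{zar}, and everything else is a direct consequence of the definitions of canonical singularity and of redundant point.
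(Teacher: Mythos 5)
Your proof is correct and follows exactly the paper's own (one-line) argument: by Definition \ref{sing} all discrepancies vanish, so $N=0$ by Lemma \ref{zar}, and hence no point can satisfy $\mult_p N \geq 1$. You have merely made explicit the steps the paper leaves implicit.
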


\begin{proof}
By Definition \ref{sing}, $N = 0$; thus, the lemma follows.
\end{proof}

\subsection{$\bar{S}$ contains at worst log terminal singularities}

\begin{lemma}\label{lt}
If $\bar{S}$ contains at worst log terminal singularities, then any surface $\widetilde{S}'$ obtained by a sequence of redundant blow-ups from $S$ has finitely many redundant points, and every redundant point is an intersection point of two curves contracted by the morphism $\widetilde{S}' \rightarrow \bar{S}$.
\end{lemma}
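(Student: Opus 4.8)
The plan is to show two things about log terminal $\bar{S}$: finiteness of redundant points after any sequence of redundant blow-ups, and that every redundant point lies at an intersection of two exceptional curves. Both should follow from controlling the coefficients of the negative part $\widetilde{N}$ of the Zariski decomposition and exploiting the log terminal condition $0 \leq a_i < 1$.

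First I would analyze a single redundant blow-up. By Lemma \ref{zar}, on $S$ we have $N = \sum a_i E_i$ with every $a_i < 1$. The formula recorded just before Lemma \ref{can} gives, after blowing up a redundant point $p$, the negative part $\widetilde{N} = \sum a_i \widetilde{E}_i + (\sum_{p \in E_j} a_j - 1) E$. Since $p$ must satisfy $\mult_p N = \sum_{p \in E_j} a_j \geq 1$, and each $a_j < 1$, a single component cannot account for this sum; so $p$ must lie on at least two of the $E_j$, hence at an intersection point $E_j \cap E_k$ (the snc condition, valid since log terminal singularities are rational, forces at most two components through $p$, with multiplicity one each). This already gives the intersection-point claim for the first blow-up, and the coefficient of the new curve $E$ is $a_j + a_k - 1 < 1$ because $a_j, a_k < 1$. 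Thus the key invariant---all coefficients of the negative part stay strictly below $1$---is preserved.

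The heart of the argument is to iterate this observation and simultaneously extract a finiteness statement. I would argue that at every stage of a sequence of redundant blow-ups, the negative part of the Zariski decomposition is a sum over the curves contracted by the map to $\bar{S}$ (strict transforms of the original $E_i$ together with all exceptional curves of the redundant blow-ups), with every coefficient strictly less than $1$; this follows by induction using the displayed $\widetilde{N}$ formula and the computation above, since a newly created curve always receives coefficient (sum of two coefficients $<1$) minus $1$, again $<1$, and an intersection of a new curve with an old one inherits the same bound. Consequently every redundant point on any $\widetilde{S}'$ must again sit at an intersection of two contracted curves, proving the second assertion in full generality. For finiteness, I would bound the number of possible configurations: because all coefficients remain in the half-open interval $[0,1)$ and a redundant point requires two coefficients summing to at least $1$, each redundant blow-up strictly decreases some natural measure of the total negative part (for instance, $N^2$ strictly increases toward $0$ by the usual formula $\widetilde{N}^2 = N^2 + (\mult_p N - 1)^2 - (\mult_p N)^2$, or one tracks that the finitely many intersection points that can be redundant shrink), so the process terminates.

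The main obstacle I anticipate is making the finiteness argument genuinely rigorous rather than merely plausible. The subtlety is that redundant blow-ups create new curves and hence new intersection points, so one cannot simply say ``there are finitely many intersection points to blow up.'' I would need a monotone quantity that strictly improves at each step and is bounded; the natural candidate is controlling the coefficients together with a dimension or determinant bound on the negative-definite intersection lattice of $N$, or showing directly that after a redundant blow-up at $E_j \cap E_k$ the new coefficient $a_j + a_k - 1$ is strictly smaller than both $a_j$ and $a_k$ when these lie in a suitable range, forcing the coefficients to decrease and eventually drop below the threshold needed to find any redundant point. Verifying that no infinite descent of redundant points is possible---equivalently that the log terminal hypothesis prevents the coefficient sums from staying $\geq 1$ indefinitely---is the technical crux, and I expect it to require a careful case analysis of how coefficients propagate along chains of the dual graph.
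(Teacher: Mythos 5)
Your first two paragraphs are exactly the paper's argument, and they already prove the lemma in full: since the exceptional configuration is snc and every coefficient of the negative part is (and, by your induction, remains) strictly less than $1$, a point of multiplicity $\geq 1$ must lie on exactly two contracted curves, and on any fixed $\widetilde{S}'$ there are only finitely many contracted curves, hence finitely many such intersection points. The difficulty you flag in your final paragraph as the ``technical crux'' comes from a misreading of the statement: the lemma asserts finiteness of the set of redundant points on each surface $\widetilde{S}'$, not termination of the sequence of redundant blow-ups. Termination is a separate claim, proved in the paper as Lemma \ref{length} by showing that the multiplicity at the relevant points drops by at least $1-a_j>0$ at each step, so that the drop is bounded away from zero. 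Incidentally, the monotone quantity you propose there is computed incorrectly: since $\widetilde{N}=f^{*}N-E$, one has $\widetilde{N}^2=N^2-1$, so $N^2$ strictly decreases away from $0$ rather than increasing toward it, and it is not the right invariant. None of this affects the lemma at hand, for which your argument is complete and coincides with the paper's.
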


\begin{proof}
By Definition \ref{sing}, $0 \leq a_i < 1$ for each $1 \leq i \leq l$. Finding a point $p$ in $S$ satisfying $\mult_p N \geq 1$ is equivalent to finding an intersection point of $E_j$ and $E_k$ such that $a_j + a_k \geq 1$ for some $1 \leq j \leq l$ and $1 \leq k \leq l$. Since the number of $E_i$'s is finite, the number of redundant points is also finite. In this case, we have
$$\widetilde{N}=\sum_{i=1}^{l}a_i \widetilde{E}_i + (a_j + a_k -1)E.$$
Observe that $a_i < 1$ for each $1 \leq i \leq l$ and $a_j+a_k-1 < 1$. Thus, after performing redundant blow-ups, the number of redundant points is still finite.
\end{proof}

\begin{lemma}\label{length}
The number of surfaces obtained by a sequence of redundant blow-ups from $S$ is finite.
\end{lemma}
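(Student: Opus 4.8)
The plan is to realize all redundant blow-ups starting from $S$ as the nodes of a rooted tree (with root $S$ and an edge for each single redundant blow-up) and to prove that this tree is finite. By Lemma \ref{lt}, the hypothesis that $\bar{S}$ is log terminal forces every surface in the tree to carry only finitely many redundant points, so the tree has finite branching; a rooted tree with finite branching and uniformly bounded height is finite, and its node set is exactly the set of surfaces in question. Hence everything reduces to producing a bound, depending only on $\bar{S}$, for the length $r$ of an arbitrary chain of redundant blow-ups $S=\widetilde{S}_0\leftarrow\widetilde{S}_1\leftarrow\cdots\leftarrow\widetilde{S}_r$.

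To control $r$ I would not track the coefficients $\alpha$ of the negative parts directly but rather the quantities $\ell:=1-\alpha$. By Lemma \ref{redlem} and the formula for $\widetilde{N}$ recorded just before Lemma \ref{lt}, a redundant blow-up centered at the intersection of two contracted curves with coefficients $\alpha_j,\alpha_k$ produces a new curve of coefficient $\alpha_j+\alpha_k-1$, while the coefficients of all other curves are unchanged. In terms of $\ell$ this reads: the new curve has $\ell=\ell_j+\ell_k$ (so $\ell$ is \emph{additive}), the center may be blown up precisely when $\alpha_j+\alpha_k\ge 1$, i.e. when $\ell_j+\ell_k\le 1$, and since $\bar{S}$ is log terminal every curve ever appearing has $\ell>0$. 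Setting $\delta:=\min_i(1-a_i)>0$, the minimum taken over the finitely many exceptional curves $E_i$ of the minimal resolution $g\colon S\to\bar{S}$, additivity shows that every curve in every $\widetilde{S}_m$ has $\ell\ge\delta$, whereas every curve created by a redundant blow-up satisfies $\ell\le 1$.

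The heart of the argument, and the step I expect to be the main obstacle, is to convert these numerical facts into a finite count. Writing each log discrepancy as an $\N$-linear combination $\ell=\sum_i c_i(1-a_i)$ of the log discrepancies of the original curves (so $c=e_i$ for $E_i$, and the new curve has type $c^{E}=c^{F_j}+c^{F_k}$ under a blow-up at $F_j\cap F_k$), the bounds $\delta\le\ell\le 1$ give $\sum_i c_i\le 1/\delta$ for every redundant curve. Now every redundant center lies on an edge of the dual graph that is a subdivision of a unique edge of the dual graph of $N$ on $S$, i.e. of a unique intersection point $E_a\cap E_b$ of original curves, and there are only finitely many such edges. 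Along the subdivisions of one such edge the type vectors are exactly the mediants $p\,e_a+q\,e_b$ generated by the Stern--Brocot construction from $e_a,e_b$, hence pairwise distinct and indexed by coprime pairs $(p,q)$ with $p,q\ge 1$; the constraint $\sum_i c_i\le 1/\delta$ then forces $p+q\le 1/\delta$, leaving only finitely many. Thus each original intersection point admits only finitely many redundant blow-ups lying over it, and summing over the finitely many original intersection points bounds $r$.

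Finally I would assemble the pieces: the uniform bound on $r$ gives bounded height and Lemma \ref{lt} gives finite branching, so the tree of redundant blow-ups from $S$ is finite, which is the assertion. (Equivalently, since $\widetilde{N}^2=N^2-1$ at each redundant blow-up by Lemma \ref{redlem}, a bound on $r$ is exactly a lower bound for $\widetilde{N}^2$ over the whole tree.) The one delicate point to verify carefully is that the subdivision process is genuinely confined to individual original edges and reproduces the Stern--Brocot pattern, so that distinctness of the type vectors may legitimately be invoked; once this is in place the counting is routine.
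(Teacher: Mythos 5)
Your argument is correct, but it takes a genuinely different route from the paper's. The paper bounds the height of the tree of redundant blow-ups directly: writing $\widetilde{p}=\widetilde{E}_j\cap E$, it computes $\mult_{\widetilde{p}}\widetilde{N}=\mult_pN-(1-a_j)$, observes that the successive decrements along any nested chain (first $1-a_j$, then $1-a_j$ or $2-a_j-a_k$, and so on) are non-decreasing and bounded below by $\min_i(1-a_i)>0$, and concludes that after at most $M(p)$ steps the multiplicity drops below $1$; this yields the explicit chain-length bound of Corollary \ref{bdm}, and finiteness of the tree then follows from the finite branching of Lemma \ref{lt}. You instead pass to the quantities $\ell=1-a$, exploit their additivity under redundant blow-ups, and count the curves that can ever appear over a fixed original intersection point $E_a\cap E_b$ by identifying them with Stern--Brocot mediants $pe_a+qe_b$ subject to $p+q\le 1/\delta$. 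Both proofs hinge on exactly the same positivity $\delta=\min_i(1-a_i)>0$, which is where log terminality enters (indeed your additivity $\ell_E=\ell_j+\ell_k$ is just the paper's multiplicity formula rewritten, since $\mult_pN=2-\ell_j-\ell_k$). Your version buys a bound on the \emph{total number} of redundant curves over each original edge, not merely on the length of a nested chain, at the cost of the extra mediant bookkeeping; the paper's decreasing-multiplicity argument is shorter and produces the sharper quantitative statement $M(p)$. The point you flag as delicate --- that the subdivision process stays confined to a single original edge and reproduces the Stern--Brocot pattern --- does hold: the exceptional configuration is snc, so the new curve $E$ meets only $\widetilde{E}_j$ and $\widetilde{E}_k$, and by Lemma \ref{lt} every subsequent redundant point again lies on an intersection of two contracted curves, so each new edge subdivides a unique old one and the unimodularity of the mediant construction gives the distinctness of the pairs $(p,q)$.
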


\begin{proof}
Let $\widetilde{p}:= \widetilde{E}_j \cap E$ the intersection point. Then, we have
$$\mult_{\widetilde{p}} \widetilde{N}=a_j + (\mult_p N-1)=\mult_p N-(1-a_j)<\mult_p  N,$$
i.e., the multiplicity strictly decreases after redundant blow-ups.

Suppose that $\mult_{\widetilde{p}}\widetilde{N} \geq 1$. Let $f' \colon \widetilde{S}' \rightarrow \widetilde{S}$ be a redundant blow-up at $\widetilde{p}$ with the redundant curve $F$, and let $\widetilde{E}_j', \widetilde{E}_k'$ and $E'$ be the strict transforms of $\widetilde{E}_j, \widetilde{E}_k$ and $E$, respectively. Let $\widetilde{p}':=\widetilde{E}_j' \cap F$ and $q:=E' \cap F$ be the intersection points.

\begin{tikzpicture}[line cap=round,line join=round,>=triangle 45,x=0.8cm,y=0.8cm]
\clip(-7.4,0.9) rectangle (8,4.7);
\draw (5.74,4)-- (3.74,2);
\draw (5,4)-- (7,2);
\draw (0.06,4.2)-- (-0.86,1.84);
\draw (1.16,4.2)-- (2.16,1.84);
\draw (1.16,4.2)-- (2.16,1.84);
\draw [dash pattern=on 3pt off 3pt]  (-0.7,3.54)-- (2.04,3.54);
\draw (2.96,3.6) node[anchor=north west] {$\xrightarrow{f}$};
\draw (3.45,2.00) node[anchor=north west] {$E_j$};
\draw (6.7,2.00) node[anchor=north west] {$E_k$};
\draw (5.15,3.52) node[anchor=north west] {$p$};
\draw (-0.3,3.6) node[anchor=north west] {$\widetilde{p}$};
\draw (-1.25,1.84) node[anchor=north west] {$\widetilde{E}_j$};
\draw (1.8,1.84) node[anchor=north west] {$\widetilde{E}_k$};
\draw (-5.9,4.22)-- (-6.82,1.86);
\draw (-4.12,4.18)-- (-3.12,1.82);
\draw (-2.3,3.6) node[anchor=north west] {$\xrightarrow{f'}$};
\draw [dotted] (-6.74,3.14)-- (-4.52,4.28);
\draw [dash pattern=on 3pt off 3pt]  (-5.54,4.26)-- (-3.22,3.22);
\draw (-7.2,1.88) node[anchor=north west] {$\widetilde{E}_j'$};
\draw (-3.5,1.84) node[anchor=north west] {$\widetilde{E}_k'$};
\draw (-6.3,3.5) node[anchor=north west] {$\widetilde{p}'$};
\draw (-5.25,3.96) node[anchor=north west] {$q$};
\draw (-1.2,4.05) node[anchor=north west] {$E$};
\draw (-3.3,3.56) node[anchor=north west] {$E'$};
\draw (-7.3,3.48) node[anchor=north west] {$F$};
\begin{scriptsize}
\fill (5.37,3.63) circle (2.0pt);
\fill  (-0.21,3.55) circle (2.0pt);
\fill  (-5.03,4.03) circle (2.0pt);
\fill  (-6.25,3.39) circle (2.0pt);
\end{scriptsize}
\end{tikzpicture}
\\
Let $-K_{\widetilde{S}'} = \widetilde{P}' + \widetilde{N}'$ be the Zariski decomposition. Then, we have
\begin{displaymath}
\begin{array}{lll}
\mult_{\widetilde{p}} \widetilde{N}-\mult_{\widetilde{p}'} \widetilde{N}'&=&\mult_{\widetilde{p}}\widetilde{N}-\{\mult_{\widetilde{p}}\widetilde{N}-(1-a_j)\} = 1-a_j,\\
\mult_{\widetilde{p}}\widetilde{N}-\mult_{q}\widetilde{N}'&=&\mult_{\widetilde{p}}\widetilde{N}-\{(\mult_{\widetilde{p}}\widetilde{N}-1) + (\mult_pN -1)\}\\
&=&2-\mult_pN\\
&=& 2-a_j - a_k.
\end{array}
\end{displaymath}
Note that $\mult_p N-\mult_{\widetilde{p}}\widetilde{N}=1-a_j < 2-a_j-a_k,$
and hence, we obtain
$$\mult_p N-\mult_{\widetilde{p}}\widetilde{N} \leq \mult_{\widetilde{p}}\widetilde{N}-\mult_{\widetilde{p}'}\widetilde{N}',$$
and
$$\mult_pN-\mult_{\widetilde{p}}\widetilde{N} \leq \mult_{\widetilde{p}}\widetilde{N}-\mult_{q}\widetilde{N}', $$
i.e., the difference of multiplicities increases after redundant blow-ups. Thus, the assertion immediately follows.
\end{proof}

There exist natural numbers $M_j$ and $M_k$ such that
\[ \begin{array}{l}
\mult_{p}N- M_j (1-a_j) < 1 \hbox{ and} \mult_{p}N-(M_j -1) (1-a_j) \geq 1,
\end{array}\]
and
\[ \begin{array}{l}
\mult_{p}N- M_k (1-a_k) < 1  \hbox{ and} \mult_{p}N-(M_k -1) (1-a_k) \geq 1.
\end{array}\]
Since $\max \{M_j, M_k \}$ depends only on $p$,  we denote it by $M(p)$.

\begin{corollary}\label{bdm}
The maximal length of sequences of redundant blow-ups from $S$ is equal to
$$\max_{p \in R} \{M(p)\},$$
where $R$ is the set of all redundant points on $S$.
\end{corollary}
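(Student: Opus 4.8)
The plan is to reduce the statement to a count performed separately over each starting redundant point and then to read off the answer from the monotonicity already established in Lemma \ref{length}. First I would observe that every sequence of redundant blow-ups from $S$ begins by blowing up some redundant point $p_0 \in R$, say $p_0 = E_j \cap E_k$, so it suffices to determine the maximal length $\ell(p_0)$ of a sequence issuing from $p_0$ and then to take $\max_{p_0 \in R} \ell(p_0)$. I will show that $\ell(p_0) = M(p_0)$.

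For the lower bound $\ell(p_0) \geq M(p_0)$, assume without loss of generality that $a_j \geq a_k$, so that $M(p_0) = M_j$. I would exhibit the explicit ``straight'' chain: blow up $p_0$, and then repeatedly blow up the intersection point of the strict transform of $E_j$ with the most recently created exceptional curve. By the computation in the proof of Lemma \ref{length}, each such blow-up decreases the relevant multiplicity by exactly $1 - a_j$; starting from $\mult_{p_0} N$ and using the defining inequalities $\mult_{p_0} N - (M_j - 1)(1 - a_j) \geq 1 > \mult_{p_0} N - M_j(1 - a_j)$, the point blown up at each of the first $M_j$ steps still satisfies $\mult N \geq 1$ and is therefore redundant, while after $M_j$ steps the multiplicity drops below $1$. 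This produces a sequence of length exactly $M_j = M(p_0)$.

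For the upper bound $\ell(p_0) \leq M(p_0)$, I would use the monotonicity recorded in Lemma \ref{length}. Along any chain of redundant blow-ups from $p_0$, the multiplicities $\mult_{p_i} N_i$ are strictly decreasing, and Lemma \ref{length} shows moreover that the successive decreases are non-decreasing: the first decrease equals $1 - a_j$ or $1 - a_k$ (according to which of the two curves the second center lies on), and every later decrease is at least as large, being $1 - a_j$ (resp. $1 - a_k$) when one continues along a curve and the strictly larger $2 - a_j - a_k$ when one ``turns a corner'' onto a newly created exceptional curve. Consequently the smallest possible per-step decrease over the whole chain is $1 - \max\{a_j, a_k\}$, and it is attained at every step only by the straight chain along the curve of larger discrepancy. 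Since the number of steps for which the running multiplicity stays $\geq 1$ is largest when the per-step decrease is smallest, no chain can have more than $\max\{M_j, M_k\} = M(p_0)$ steps. Combining the two bounds gives $\ell(p_0) = M(p_0)$, and taking the maximum over $p_0 \in R$ yields the corollary.

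The main obstacle I anticipate is the upper bound, and specifically converting the qualitative statement that ``the differences of multiplicities increase after redundant blow-ups'' into the quantitative claim that the constant-decrease straight chain is genuinely the longest. The delicate point is that after the first blow-up there are two candidate redundant points, one on each curve through $p_0$, and after each later blow-up there is again a branching between continuing along a curve and turning onto the new exceptional curve; one must check that at every branching the option with the smaller decrease is the one that stays on a single curve of maximal discrepancy, so that greedily minimizing the decrease is globally optimal. This follows from $2 - a_j - a_k > 1 - a_j$ and $2 - a_j - a_k > 1 - a_k$, which hold because $a_j, a_k < 1$ by log terminality, and which is precisely the inequality isolated in the proof of Lemma \ref{length}.
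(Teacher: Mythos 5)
Your argument is correct and is essentially the paper's own (the corollary is stated there without a separate proof, as an immediate consequence of the multiplicity computations in the proof of Lemma \ref{length} and the definition of $M_j$, $M_k$): the straight chain along the curve of larger discrepancy realizes $M(p)$, and the inequality $2-a_j-a_k > 1-a_j$ shows every other choice of centers decreases the multiplicity at least as fast, giving the matching upper bound. Nothing further is needed.
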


\begin{remark}
Corollary \ref{bdm} shows that there is a bound of the length of sequences of redundant blow-ups for a given surface $S$. However, there is no global bound for $M(p)$ (see Example \ref{ltex}; we have $M_1 > m-2 - \frac{2m}{n}$ and $M_2 > n-2 - \frac{2n}{m}$, and hence, $M(p)$ can be increased arbitrarily large as $m$ and $n$ goes to infinity).
\end{remark}

\subsection{$\bar{S}$ contains worse than log terminal singularities}

\begin{lemma}\label{rtsing}
If $\bar{S}$ contains a singularity that is not a log terminal singularity, then any surface $\widetilde{S}'$ obtained by a sequence of redundant blow-ups from $S$ contains a curve $C$ contracted by the morphism $\widetilde{S}' \rightarrow \bar{S}$ such that every point in $C$ is a redundant point. In particular, $\widetilde{S}'$ has infinitely many redundant points.
\end{lemma}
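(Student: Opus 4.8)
The plan is to find a single $g$-exceptional curve whose coefficient in the negative part $N$ is at least $1$, and to show that this coefficient is untouched by any redundant blow-up; the strict transform of that curve will be the desired $C$.

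First I would note that, since $\bar{S}$ is not log terminal, Definition \ref{sing} furnishes a $g$-exceptional curve $E_{i_0}$ with discrepancy $a_{i_0} \geq 1$. Already on $S$, every point $q \in E_{i_0}$ satisfies $\mult_q N \geq a_{i_0}\mult_q E_{i_0} \geq a_{i_0} \geq 1$ by Lemma \ref{zar}, using that all $a_i \geq 0$; note this invokes only $\mult_q E_{i_0} \geq 1$, so no smoothness of $E_{i_0}$ is needed (relevant since a non-log-terminal, possibly non-rational, singularity need not have smooth exceptional curves). Next I would track this curve through an arbitrary sequence of redundant blow-ups $h' \colon \widetilde{S}' \to S$. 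By Lemma \ref{redlem} a single redundant blow-up $f$ at $p$ gives $\widetilde{N} = f^{*}N - E$, and expanding $f^{*}E_i = \widetilde{E}_i + (\mult_p E_i)E$ yields
$$\widetilde{N} = \sum_{i=1}^{l} a_i \widetilde{E}_i + (\mult_p N - 1)E,$$
so each strict transform retains its original coefficient. Inducting on the length of the sequence, the strict transform $C$ of $E_{i_0}$ in $\widetilde{S}'$ occurs in the negative part $\widetilde{N}'$ of $-K_{\widetilde{S}'}$ with coefficient exactly $a_{i_0} \geq 1$.

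Finally, $C$ maps onto $E_{i_0}$ under $h'$, and $E_{i_0}$ is contracted by $g$ to the singular point, so $C$ is contracted by $\widetilde{S}' \to \bar{S}$; moreover, for any $q \in C$ one has $\mult_q \widetilde{N}' \geq a_{i_0}\mult_q C \geq a_{i_0} \geq 1$, so every point of $C$ is redundant and $\widetilde{S}'$ consequently carries infinitely many redundant points. The only delicate point — and the real content of the statement — is that we must produce an \emph{entire curve} of redundant points rather than isolated ones; this is precisely where the non-log-terminal hypothesis enters, since it supplies a single curve of coefficient $\geq 1$, in contrast to the log terminal case where $a_i < 1$ for all $i$ and redundancy is confined to the intersection points $E_j \cap E_k$ with $a_j + a_k \geq 1$. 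Once that curve is identified, the remainder is just the bookkeeping of coefficients delivered by Lemma \ref{redlem}.
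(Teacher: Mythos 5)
Your proposal is correct and follows essentially the same route as the paper: pick an exceptional curve with discrepancy $a_{i_0}\geq 1$ and observe that its strict transform keeps coefficient $a_{i_0}$ in the negative part after each redundant blow-up, so all its points remain redundant. The paper's own proof is a brief sketch of exactly this; you have merely supplied the coefficient bookkeeping (via Lemma \ref{redlem}) and the useful observation that smoothness of the exceptional curve is not needed.
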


\begin{proof}
By Definition \ref{sing}, we can choose an integer $k$ with  $1 \leq k \leq l$ such that $a_k \geq 1$, and hence, every point in $E_k$ is a redundant point. After a redundant blow-up $\widetilde{S} \rightarrow S$ at a point $p$ in $E_k$, every point in the proper transform of $E_k$ is again a redundant point in $\widetilde{S}$. In this way, the lemma easily follows.
\end{proof}

We are ready to prove Theorem \ref{redpt}.

\begin{proof}[Proof of Theorem \ref{redpt}]
By Lemmas \ref{lt}, \ref{length}, \ref{rtsing}, the theorem holds.
\end{proof}

\section{Existence of redundant points}\label{discsec}

In this section, we focus on the existence of redundant points, and we prove Theorem \ref{reddisc} at the end. As in Section \ref{redptsec}, we use the following notations throughout this section: $\bar{S}$ denotes a normal projective rational surface such that $-K_{\bar{S}}$ is nef, and $g \colon S \rightarrow \bar{S}$ denotes the minimal resolution.

To prove Theorem \ref{reddisc}, by Lemmas \ref{can} and \ref{rtsing}, we only need to consider the case when $\bar{S}$ has at worst log terminal singularities. Recall that in this case, $p \in S$ is a redundant point if and only if there are two intersecting irreducible exceptional curves $E_j$ and $E_k$ in the minimal resolution such that the sum of discrepancies $a_j + a_k \geq 1$. Thus, it suffices to consider the problem locally near each singular point $s$ in $\bar{S}$, and hence, we can throughoutly assume that $\bar{S}$ contains only one log terminal singular point $s$.


In the case of characteristic zero, Brieskorn completely classified finite subgroups of $GL(2, k)$ without quasi-reflections, i.e., he classified all the dual graphs of quotient singularities of surfaces (\cite{Bri68}). It turns out that the complete list of the dual graphs of the log terminal surface singularities remains the same in arbitrary characteristic (see \cite{Ale91}). For the reader's convenience, we will give a detailed description of all possible types ($A_{q,q_1}, D_{q,q_1}, T_m, O_m$ and $I_m$) of dual graphs and discrepancies of log terminal singularities in the following subsections.

\subsection{$A_{q,q_1}$-type}\label{ltAsss}

The dual graph of a log terminal singularity $s \in \bar{S}$ of type $A_{q,q_1}$ is

\begin{tikzpicture}[line cap=round,line join=round,>=triangle 45,x=1.0cm,y=1.0cm]
\clip(-4.3,2.2) rectangle (7.28,4.0);
\draw (-1,3)-- (0,3);
\draw (0,3)-- (1,3);
\draw (2.58,3)-- (3.58,3);
\draw (-1.24,3.66) node[anchor=north west] {$E_1$};
\draw (-0.24,3.66) node[anchor=north west] {$E_2$};
\draw (0.78,3.66) node[anchor=north west] {$E_3$};
\draw (1.45,3.16) node[anchor=north west] {$\cdots$};
\draw (2.28,3.66) node[anchor=north west] {$E_{l-1}$};
\draw (3.36,3.66) node[anchor=north west] {$E_l$};
\draw (-1.3,2.84) node[anchor=north west] {$-n_1$};
\draw (-0.3,2.84) node[anchor=north west] {$-n_2$};
\draw (0.68,2.84) node[anchor=north west] {$-n_3$};
\draw (2.1,2.84) node[anchor=north west] {$-n_{l-1}$};
\draw (3.36,2.84) node[anchor=north west] {$-n_l$};
\begin{scriptsize}
\fill [color=black] (-1,3) circle (2.5pt);
\fill [color=black] (0,3) circle (2.5pt);
\fill [color=black] (1,3) circle (2.5pt);
\fill [color=black] (2.58,3) circle (2.5pt);
\fill [color=black] (3.58,3) circle (2.5pt);
\end{scriptsize}
\end{tikzpicture}\\
where each $n_i \geq 2$ is an integer for all i. This singularity can be characterized by the so-called \emph{Hirzebruch-Jung continued fraction}
 \[
 \frac{q}{q_1}=[n_1, n_2, ..., n_l]= n_1 - \dfrac{1}{n_2-\dfrac{1}{\ddots -
\dfrac{1}{n_l}}}.
 \]
The intersection matrix is
\begin{displaymath}
M(-n_1, \ldots, -n_l) := \left( \begin{array}{cccccc}
-n_1 & 1 & 0 & \cdots & \cdots &0 \\
1 & -n_2 & 1 & \cdots & \cdots& 0 \\
0 & 1 & -n_3 & \cdots & \cdots& 0 \\
\vdots & \vdots & \vdots & \ddots & \vdots & \vdots\\
0 & 0& 0 & \cdots  & -n_{l-1}&1  \\
0 & 0& 0 & \cdots &1 & -n_l \\
\end{array} \right).
\end{displaymath}
For simplicity, we use the notation $[n_1, \ldots, n_l]$ to refer to a log terminal singularity of $A_{q,q_1}$-type. Note that $[n_1, \ldots, n_l]$ and $[n_l, \ldots, n_1]$ denote the same singularity.

We will use the following notation for convenience (cf. \cite{HK09}).
\begin{enumerate}
\item $q_{b_1, b_2, \ldots, b_m} := |\det(M')|$ where $M'$ is the $(l-m)\times(l-m)$ matrix obtained by deleting $-n_{b_1}, -n_{b_2}, \ldots, -n_{b_m}$ from $M(-n_1, \ldots, -n_l)$. For convenience, we also define $q_{1, \ldots, l}=|\det(M(\emptyset))|=1$.
\item $u_s := q_{s, \ldots, l} = |\det(M(-n_1, \ldots, -n_{s-1}))|$ $(2 \leq s \leq l)$, $u_0=0, u_1 = 1$.
\item $v_s := q_{1, \ldots, s} = |\det(M(-n_{s+1}, \ldots, -n_l))|$ $(1 \leq s \leq l-1)$, $v_l=1, v_{l+1}=0$.
\item $q = |\det(M(-n_1, \ldots, -n_l))|=u_{l+1}=v_0$.
\item $|[n_1, \ldots, n_l]|:=|\det (M(-n_1, \ldots, -n_l))|$.
\end{enumerate}
It follows that $ q_1 = |\det(M(-n_2,\ldots, -n_l)|=v_1$. The following properties of Hirzebruch-Jung continued fractions will be used.

\begin{lemma}\label{uv} For $1 \leq i \leq l$, we have the following.
\begin{enumerate}
    \item $a_i = 1 - \frac{u_i + v_i}{q}$.
    \item $u_{i+1} = n_i u_{i} - u_{i-1}$, $v_{i-1} = n_i v_i -v_{i+1}$.
    \item $v_i  u_{i+1}- v_{i+1}u_{i} =v_{i-1}  u_{i} - v_i  u_{i-1}= q. $
    \item $|[n_1,\ldots, n_{i-1}, n_i +1, n_{i+1}, \ldots, n_l]|=v_i u_{i}+|[n_1, n_2, \ldots, n_l]|>q.$
    \item If $l \geq 2$ and $n_1 \geq 3$, then $v_1 + v_2 < q$.
\end{enumerate}
\end{lemma}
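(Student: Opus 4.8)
The plan is to prove each of the five identities in Lemma~\ref{uv} by exploiting the recursive structure of the determinants $u_s$ and $v_s$, which are themselves determinants of contiguous tridiagonal submatrices of $M(-n_1,\ldots,-n_l)$. The key observation is that expanding such a tridiagonal determinant along its first or last row yields precisely a two-term linear recurrence, so parts (1)--(3) are essentially statements about these recurrences. I would begin with part (2), since the other parts depend on it. Expanding $u_{i+1}=|\det M(-n_1,\ldots,-n_i)|$ along its last row (which has entries $1$ in the $(i-1)$st slot and $-n_i$ in the $i$th slot) gives $u_{i+1}=n_i u_i - u_{i-1}$ after accounting for the sign from the cofactor; the analogous expansion of $v_{i-1}$ along its first row gives $v_{i-1}=n_i v_i - v_{i+1}$. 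These two recurrences, together with the boundary values $u_0=0,u_1=1,v_l=1,v_{l+1}=0$ stipulated in the notation, completely determine the sequences.

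\textbf{Parts (1) and (3).} For part (3), I would show that the quantity $\Delta_i := v_{i-1}u_i - v_i u_{i-1}$ is independent of $i$ by a telescoping/induction argument: using the recurrences from part (2), one checks $v_i u_{i+1}-v_{i+1}u_i = v_i(n_i u_i - u_{i-1}) - (n_i v_i - v_{i-1})u_i = v_{i-1}u_i - v_i u_{i-1}$, so $\Delta_i=\Delta_{i+1}$ for all $i$; evaluating at a convenient index (say $i=l$, where $v_l=1,v_{l+1}=0$, giving $\Delta_l = v_{l-1}u_l - u_{l-1}$, or more cleanly at the boundary using $v_0=q$) pins the common value to $q$. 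Part (1) is the formula $a_i = 1-(u_i+v_i)/q$ for the discrepancies; here I would verify that the vector with entries $a_i$ as defined solves the defining linear system $\sum_j a_j E_j E_i = -n_i+2$ from the discrepancy matrix equation. Concretely, substituting $a_i=1-(u_i+v_i)/q$ and using $\sum_j E_jE_i a_j = a_{i-1}-n_i a_i + a_{i+1}$ (the tridiagonal action), the constant terms contribute $-n_i+2$ and the $(u_i+v_i)/q$ terms cancel exactly by the recurrences of part (2). This is the step I expect to require the most bookkeeping, since one must handle the boundary indices $i=1$ and $i=l$ separately where the neighbors drop out.

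\textbf{Parts (4) and (5).} For part (4), the left-hand side is the determinant of the matrix obtained from $M(-n_1,\ldots,-n_l)$ by changing the $i$th diagonal entry from $-n_i$ to $-(n_i+1)$. I would expand this perturbed determinant by multilinearity in the $i$th row: it equals $|\det M(-n_1,\ldots,-n_l)|$ plus the determinant of the matrix with a single $-1$ in position $(i,i)$ and the original entries elsewhere in that row, and the latter splits as a product $v_i u_i$ of the determinants of the two blocks flanking row $i$ (up to the overall sign arranged so everything is the absolute value $q$). Since $v_i,u_i\ge 1$ and the perturbed fraction strictly exceeds $q/q_1$, the inequality $>q$ follows. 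Finally, part (5) is the inequality $v_1+v_2<q$ under the hypotheses $l\ge 2$ and $n_1\ge 3$. Using $q=v_0=n_1 v_1 - v_2$ from part (2), one gets $q-(v_1+v_2)=(n_1-1)v_1-2v_2$, so it suffices to show $(n_1-1)v_1>2v_2$; since $n_1\ge 3$ gives $(n_1-1)v_1\ge 2v_1$, I would reduce to $v_1>v_2$, i.e.\ the strict monotonicity of the $v$-sequence, which follows from the recurrence $v_{i-1}=n_i v_i - v_{i+1}\ge 2v_i - v_{i+1}$ together with $v_l=1>0=v_{l+1}$ by a downward induction showing $v_i>v_{i+1}$ for all $i$. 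The main obstacle throughout is purely organizational: keeping the sign conventions of the cofactor expansions consistent with the absolute-value definitions of $u_s,v_s,q$, and treating the boundary indices uniformly via the conventions $u_0=0$, $v_{l+1}=0$.
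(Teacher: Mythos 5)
Your proposal is correct, and it actually does more work than the paper does: for parts (1)--(4) the paper simply cites \cite[Lemma 2.2]{HK11} and \cite[Lemma 2.4]{HK09} as well-known facts, whereas you supply self-contained proofs (cofactor expansion of the tridiagonal determinants for (2), telescoping $v_{i-1}u_i - v_iu_{i-1}$ down to $v_0u_1 = q$ for (3), verification against the adjunction linear system for (1), and row-multilinearity for (4)); all of these are sound and are presumably what the cited references do. For part (5), the only item the paper proves, your argument is the same as the paper's: write $q = n_1v_1 - v_2$ and use $n_1 \geq 3$ together with the strict decrease of the $v$-sequence. Two minor remarks: in (1) you should note explicitly that the intersection matrix is negative definite, hence invertible, so exhibiting one solution of the system identifies it with the discrepancy vector; and in (4) the clause about ``the perturbed fraction strictly exceeding $q/q_1$'' is superfluous --- the strict inequality $>q$ follows directly from $u_iv_i \geq 1$, which holds since $1 \leq i \leq l$ forces $u_i, v_i \geq 1$.
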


\begin{proof}
(1)-(4) are well-known facts (for (1), see \cite[Lemma 2.2]{HK11}, and for (2)-(4), see \cite[Lemma 2.4]{HK09}). It is straightforward to check (5) as follows:

$q    = v_1u_2-v_2u_1  =  v_1 \cdot n_1 - v_2 = (n_1-1)v_1 +(v_1-v_2)
      > 2v_1 > v_1 + v_2$.
\end{proof}

Now, we give a criterion for checking $a_k + a_{k+1} \geq 1$ for some $k$. Recall that the intersection point of $E_K$ and $E_{k+1}$ is a redundant point in $S$.

\begin{lemma}\label{TFAE} The following are equivalent.
 \begin{enumerate}
    \item $a_k + a_{k+1} \geq 1$ for some $1 \leq k \leq l-1$.
    \item $q \geq u_k + u_{k+1} + v_k + v_{k+1}$ for some $1 \leq k \leq l-1$.
 \end{enumerate}
\end{lemma}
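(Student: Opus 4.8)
The plan is to prove the equivalence directly from part (1) of Lemma \ref{uv}, which expresses each discrepancy as $a_i = 1 - \frac{u_i + v_i}{q}$. This formula is the bridge between the intersection-theoretic quantities $u_i, v_i, q$ and the discrepancies, so the entire statement should reduce to an algebraic manipulation.

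First I would substitute the formula for discrepancies into the inequality $a_k + a_{k+1} \geq 1$. Using Lemma \ref{uv}(1), we have
\begin{displaymath}
a_k + a_{k+1} = \left(1 - \frac{u_k + v_k}{q}\right) + \left(1 - \frac{u_{k+1} + v_{k+1}}{q}\right) = 2 - \frac{u_k + u_{k+1} + v_k + v_{k+1}}{q}.
\end{displaymath}
Thus the condition $a_k + a_{k+1} \geq 1$ is equivalent to
\begin{displaymath}
2 - \frac{u_k + u_{k+1} + v_k + v_{k+1}}{q} \geq 1,
\end{displaymath}
which, after rearranging and using that $q > 0$ (since $q = |\det(M)|$ and the intersection matrix is negative definite, so its determinant is nonzero), becomes exactly $q \geq u_k + u_{k+1} + v_k + v_{k+1}$.

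Since every step in the chain of equivalences is reversible — the substitution is an identity and clearing the positive denominator $q$ preserves the direction of the inequality — the two statements are equivalent for each fixed $k$, and hence equivalent as existential statements over $1 \leq k \leq l-1$. I do not expect any genuine obstacle here: the lemma is essentially a bookkeeping consequence of the discrepancy formula, and the only point requiring a word of care is noting that $q > 0$ so that multiplying through by $q$ does not flip the inequality. The real content was already packaged into Lemma \ref{uv}(1), so the proof is short and purely computational.
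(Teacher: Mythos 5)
Your proof is correct and follows exactly the same route as the paper's: substitute the discrepancy formula $a_i = 1 - \frac{u_i + v_i}{q}$ from Lemma \ref{uv}(1), combine the two terms into $2 - \frac{u_k + u_{k+1} + v_k + v_{k+1}}{q}$, and clear the positive denominator. The only difference is that you explicitly note $q > 0$, which the paper leaves implicit.
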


\begin{proof}
By Lemma \ref{uv} (1), we have
$$a_k + a_{k+1} = \left(1 - \frac{u_k + v_k}{q} \right) + \left(1 - \frac{u_{k+1} + v_{k+1}}{q} \right) = 2 - \frac{ u_k + u_{k+1} + v_k + v_{k+1} }{q},$$
and hence, the equivalence follows.
\end{proof}

The singularity $[\underbrace{2,\ldots,2}_{l}]$ is a canonical singularity of type $A_l$, and hence, $q=l+1$ and $a_i=0$ for all $1 \leq i \leq l$. On the other hand, the minimal resolution $S$ of the singularity $[\underbrace{2,\ldots,2}_{l-1},3]$ does not contain any redundant point. Indeed, $q=2l+1$ and $u_i=i$, $v_i=2l-2i+1$ for $1 \leq i \leq l$ by using Lemma \ref{uv}. Then, for $1 \leq i \leq l-1$,
$$u_i + u_{i+1} + v_i + v_{i+1} = 4l - 2i + 1 > 2l+1=q,$$
and hence, by Lemma \ref{TFAE}, there is no redundant point on $S$.

\begin{proposition}\label{A}
Let $s \in \bar{S}$ be a log terminal singularity of type $A_{q,q_1}$. Then, $S$ does not contain a redundant point if and only if $s$ is of type
$[\underbrace{2,\ldots,2}_{\alpha}] (\alpha \geq 1)$, $[\underbrace{2,\ldots,2}_{\alpha},3] (\alpha \geq 1)$, $[2,2,3,2]$, $[2,3,2]$, $[2,4]$, or $[n]$ for $n \geq 3$.
\end{proposition}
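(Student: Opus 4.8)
The plan is to translate the statement into the numerical criterion of Lemma~\ref{TFAE} and then argue according to how many entries of the chain exceed $2$. Since the dual graph is a chain, the only singular intersection points are $E_k\cap E_{k+1}$, so $S$ has no redundant point precisely when
\[
u_k+u_{k+1}+v_k+v_{k+1}>q\quad\text{for every }1\leq k\leq l-1.
\]
Set $w_i:=u_i+v_i$, so that $a_i=1-w_i/q$ by Lemma~\ref{uv}(1) and $w_0=w_{l+1}=q$. By Lemma~\ref{uv}(2) both $u$ and $v$ satisfy $x_{i+1}=n_ix_i-x_{i-1}$, hence so does $w$, and its second difference is $w_{i+1}-2w_i+w_{i-1}=(n_i-2)w_i\geq 0$. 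Thus $w$ is convex, pinned at $q$ on both ends, and strictly convex exactly where $n_i\geq 3$; the criterion reads $w_k+w_{k+1}>q$ for all $k$. I would split into the cases of zero, one, or at least two entries $\geq 3$.

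If no $n_i\geq 3$, then $w$ is affine with equal endpoints, so $w\equiv q$, every $a_i=0$, and $\bar S$ is the canonical $A_\alpha$ singularity $[\underbrace{2,\dots,2}_{\alpha}]$. If there is exactly one entry $n_p=t+2$ ($t\geq1$), with $a$ twos to its left and $b$ to its right, then $w$ has a single valley at $p$, so by convexity $w_k+w_{k+1}$ is smallest at $k=p-1$ and $k=p$. The strings of $2$'s give $u_p=a+1$, $v_p=b+1$, $u_{p+1}=(t+2)(a+1)-a$, $v_{p-1}=(t+2)(b+1)-b$, and using $q=u_{k+1}v_k-u_kv_{k+1}$ from Lemma~\ref{uv}(3) the two binding inequalities collapse to
\[
a\,t\,(b+1)<a+b+2\qquad\text{and}\qquad b\,t\,(a+1)<a+b+2
\]
(each being automatic when the corresponding side carries no $2$). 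A short enumeration of the nonnegative solutions with $t\geq1$ returns exactly $[n]$ for $n\geq3$, $[\underbrace{2,\dots,2}_{\alpha},3]$ for $\alpha\geq1$, $[2,4]$, $[2,3,2]$, and $[2,2,3,2]$, i.e.\ the remaining listed types.

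The substantive case is two or more entries $\geq 3$, where I expect a redundant point always to exist. The engine is a monotonicity lemma: increasing a single $n_{i_0}$ by one does not decrease any discrepancy. Writing $B=-M$ for the intersection matrix (a positive definite Stieltjes matrix, so $B^{-1}\geq0$ entrywise) and $a=B^{-1}r$ with $r_i=n_i-2\geq0$, the increment replaces $B$ by $B'=B+e_{i_0}e_{i_0}^{\top}$ and $r$ by $r+e_{i_0}$, and the rank-one update identity yields
\[
a'-a=(1-a_{i_0})\,(B')^{-1}e_{i_0}\geq0,
\]
since $1-a_{i_0}=w_{i_0}/q>0$ and $(B')^{-1}\geq0$. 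Hence having a redundant point is preserved under raising any entry, so after lowering every other entry to $2$ and the two chosen entries to $3$ it suffices to treat the minimal two-bump chains $[\underbrace{2,\dots,2}_{a},3,\underbrace{2,\dots,2}_{c},3,\underbrace{2,\dots,2}_{b}]$. For these I would take the pair at the first $3$ (namely $k=a+1$): the factored form of the criterion, $(u_{k+1}-1)(v_k-1)\geq(u_k+1)(v_{k+1}+1)$, becomes after computing the continuants
\[
(u_{k+1}-1)(v_k-1)-(u_k+1)(v_{k+1}+1)=abc+2ac+3ab+2a+2b\geq0,
\]
so $a_{a+1}+a_{a+2}\geq1$ and the chain does have a redundant point.

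The hard part is this last case. Checking the listed families and carrying out the single-bump enumeration is routine continuant bookkeeping, but eliminating the unbounded family of chains with two or more entries $\geq3$ hinges on the monotonicity lemma, whose proof in turn rests on the Stieltjes property $B^{-1}\geq0$ together with the rank-one update. That is what lets me reduce to the single explicit inequality $abc+2ac+3ab+2a+2b\geq0$; I expect the continuant computations and the correct identification of the binding pairs to be the most error-prone steps, while the conceptual crux is the monotonicity of discrepancies under increasing self-intersections.
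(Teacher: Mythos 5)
Your argument is correct, and it reaches the classification by a genuinely different route in its two key steps. Where the paper's Lemma \ref{A1} computes continuants separately for the three one-bump families $[2,\dots,2,3,2,\dots,2]$, $[2,\dots,2,4,2,\dots,2]$ (and the two-bump family), you instead observe that $w_i:=u_i+v_i$ satisfies the same three-term recurrence as $u$ and $v$, hence is convex with $w_0=w_{l+1}=q$; this pins the binding constraints of Lemma \ref{TFAE} at the unique valley and collapses the one-bump case to the two inequalities $at(b+1)<a+b+2$ and $bt(a+1)<a+b+2$, whose enumeration I checked and which does return exactly the listed types. More substantially, your monotonicity lemma is stronger than the paper's Lemma \ref{A2}: the paper only propagates the single inequality $u_j+u_{j+1}+v_j+v_{j+1}\leq q$ under raising one entry, via a three-case continuant computation, whereas you show via the Stieltjes property $B^{-1}\geq 0$ and the Sherman--Morrison identity that \emph{every} discrepancy is non-decreasing when any $n_{i_0}$ is raised (the factor $1-a_{i_0}=w_{i_0}/q$ is indeed positive, so the formula $a'-a=(1-a_{i_0})(B')^{-1}e_{i_0}\geq 0$ is sound). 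This buys a cleaner reduction of the multi-bump case to the minimal chains $[2^a,3,2^c,3,2^b]$, for which your identity $(u_{k+1}-1)(v_k-1)-(u_k+1)(v_{k+1}+1)=abc+2ac+3ab+2a+2b$ at $k=a+1$ checks out (including the degenerate case $c=0$), matching the paper's Lemma \ref{A1}(2). The trade-off is that the paper's route stays entirely within elementary Hirzebruch--Jung identities and is self-contained at that level, while yours imports the M-matrix inverse-positivity fact but yields a more conceptual and reusable statement (discrepancies increase with the $n_i$) and avoids most of the case-by-case continuant bookkeeping.
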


To prove the proposition, we need two more lemmas.

\begin{lemma}\label{A1}
The following hold.
 \begin{enumerate}
    \item If $s$ is the singularity $[\underbrace{2,\ldots,2}_{\alpha},3,\underbrace{2,\ldots,2}_{\beta}]$ $(\alpha \geq \beta \geq 1)$, then $q=\alpha \beta + 2 \alpha + 2 \beta + 3$ and the intersection point of $E_{\alpha}$ and $E_{\alpha+1}$ is a redundant point, except for $[2,2,3,2]$ and $[2,3,2]$.
    \item If $s$ is the singularity  $[\underbrace{2,\ldots,2}_{\alpha},3,\underbrace{2,\ldots,2}_{\beta}, 3, \underbrace{2,\ldots,2}_{\gamma}]$ $(\alpha \geq 0$, $ \beta \geq 1$ and $ \gamma \geq 0)$, then the intersection point of $E_{\alpha+1}$ and $E_{\alpha+2}$ is a redundant point.
    \item If $s$ is the singularity $[\underbrace{2,\ldots,2}_{\alpha},4,\underbrace{2,\ldots,2}_{\beta}]$ $(\alpha \geq \beta \geq 0)$, then the intersection point of $E_{\alpha}$ and $E_{\alpha+1}$ is a redundant point, except for $[2,4]$ and $[4]$.
 \end{enumerate}
\end{lemma}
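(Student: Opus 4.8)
The plan is to verify each of the three cases by computing the relevant quantities $u_i, v_i, q$ explicitly using the recurrences in Lemma \ref{uv} (2), and then checking the redundancy criterion $q \geq u_k + u_{k+1} + v_k + v_{k+1}$ from Lemma \ref{TFAE}. Since the singularities in each case consist of long strings of $(-2)$-curves interrupted by one or two curves of self-intersection $-3$ or $-4$, the key observation is that the recurrence $u_{i+1} = 2u_i - u_{i-1}$ holds across any block of $(-2)$'s, which forces $u_i$ to be \emph{linear} (arithmetic) in $i$ on such a block; the value jumps only at the exceptional $-3$ or $-4$ vertices, where $u_{i+1} = 3u_i - u_{i-1}$ or $4u_i - u_{i-1}$. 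The same holds for $v_i$ read from the opposite end. So first I would set up these piecewise-linear formulas for $u_i$ and $v_i$ on each block, using the boundary values $u_0 = 0, u_1 = 1$ and $v_l = 1, v_{l+1} = 0$.

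\emph{Case (1).} For $[\underbrace{2,\ldots,2}_{\alpha},3,\underbrace{2,\ldots,2}_{\beta}]$ I would compute $q$ directly from the continued fraction (or by expanding the determinant along the $-3$ vertex), obtaining $q = \alpha\beta + 2\alpha + 2\beta + 3$ as asserted. Then I would examine the candidate redundant point $E_\alpha \cap E_{\alpha+1}$, i.e. the intersection of the last $(-2)$-curve in the first block with the central $(-3)$-curve. On the first block $u_i = i$ (since it starts with $(-2)$'s from the $u_0=0,u_1=1$ end), so $u_\alpha = \alpha$ and $u_{\alpha+1} = 3\alpha - (\alpha-1) = 2\alpha+1$. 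Symmetrically, reading $v$ from the right block I would get $v_{\alpha+1}$ and $v_\alpha$ as small explicit linear expressions in $\beta$. Substituting into $u_\alpha + u_{\alpha+1} + v_\alpha + v_{\alpha+1}$ and comparing with $q$, the inequality $q \geq u_\alpha+u_{\alpha+1}+v_\alpha+v_{\alpha+1}$ should reduce to a simple polynomial inequality in $\alpha,\beta$; the stated exceptions $[2,2,3,2]$ (i.e. $\alpha=2,\beta=1$) and $[2,3,2]$ (i.e. $\alpha=\beta=1$) are exactly the small cases where this inequality fails, so the computation must be carried out carefully enough to see those two boundary failures and nothing more.

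\emph{Cases (2) and (3).} These proceed identically. For case (2) the candidate point is $E_{\alpha+1} \cap E_{\alpha+2}$, which is the intersection of the first $-3$ vertex with the $(-2)$-curve immediately following it; here I expect the relevant $u,v$ values to stay small relative to $q$ regardless of $\alpha,\beta,\gamma$, so the criterion holds with no exceptions. For case (3) the candidate is $E_\alpha \cap E_{\alpha+1}$ adjacent to the $-4$ vertex, and the two exceptional singularities $[2,4]$ and $[4]$ arise precisely when the first block is too short ($\alpha \leq 1$ with $\beta$ correspondingly minimal) for the inequality to hold. In each case I would tabulate the finitely many $u_i,v_i$ near the relevant vertex via the recurrence and substitute. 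The main obstacle will be purely bookkeeping: keeping the indices of $u_i$ versus $v_i$ aligned correctly across the blocks (recall $u$ counts from the left, $v$ from the right), and isolating exactly the degenerate small-parameter cases where the inequality reverses. There is no conceptual difficulty beyond Lemma \ref{uv} and Lemma \ref{TFAE}; the whole argument is a finite, if slightly tedious, verification of linear recurrences and a polynomial inequality in each of the three families.
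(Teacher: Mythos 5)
Your plan is exactly the paper's proof: compute $u_\alpha,u_{\alpha+1},v_\alpha,v_{\alpha+1}$ (resp.\ the shifted indices in case (2)) and $q$ via Lemma \ref{uv}, then test the inequality of Lemma \ref{TFAE} and isolate the small parameter values where it fails. So the route is the same; the only issue is that the one computation you actually exhibit is wrong, and wrong in a way that would change the answer. The recurrence of Lemma \ref{uv}(2) is $u_{i+1}=n_i u_i-u_{i-1}$, with the coefficient $n_i$ attached to the \emph{lower} index; since $E_\alpha$ is still a $(-2)$-curve, the correct value is $u_{\alpha+1}=2u_\alpha-u_{\alpha-1}=\alpha+1$ (equivalently $u_{\alpha+1}=|[2,\ldots,2]|$ with $\alpha$ entries), and the jump caused by the $-3$ vertex only appears at $u_{\alpha+2}=3u_{\alpha+1}-u_\alpha=2\alpha+3$. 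You instead wrote $u_{\alpha+1}=3\alpha-(\alpha-1)=2\alpha+1$.

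This matters quantitatively. With the correct values $u_\alpha=\alpha$, $u_{\alpha+1}=\alpha+1$, $v_\alpha=2\beta+3$, $v_{\alpha+1}=\beta+1$ and $q=\alpha\beta+2\alpha+2\beta+3$, the discriminating quantity is $q-(u_\alpha+u_{\alpha+1}+v_\alpha+v_{\alpha+1})=(\alpha-1)\beta-2$, which under $\alpha\geq\beta\geq1$ is negative only for $(\alpha,\beta)=(1,1)$ and $(2,1)$, i.e.\ exactly $[2,3,2]$ and $[2,2,3,2]$. With your value $2\alpha+1$ the quantity becomes $\alpha\beta-\alpha-\beta-2$, which is negative for the whole family $\beta=1$ (and for $[2,2,3,2,2]$, $[2,2,2,3,2,2]$), so you would report far more exceptions than the lemma states. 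The plan is sound and purely computational as you say, but the index alignment in the recurrence is precisely the bookkeeping you flag as the main obstacle, and it has to be done correctly for the exceptional list to come out right.
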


\begin{proof}
The strategy is as follows. First, we compute $u_{\alpha}, u_{\alpha+1}, u_{\alpha+2}, v_{\alpha}, v_{\alpha+1}, v_{\alpha+2}$ and $q$ using Lemma \ref{uv}. Second, we determine whether
$$q \geq u_{\alpha}+u_{\alpha+1}+v_{\alpha}+v_{\alpha+1}  \quad \text{ or } \quad q \geq u_{\alpha+1}+u_{\alpha+2}+v_{\alpha+1}+v_{\alpha+2}$$
holds or not. Finally, by applying Lemma \ref{TFAE}, we can find a redundant point.

(1) We have
\begin{displaymath}
\begin{array}{l}
u_{\alpha} = |[\underbrace{2,\ldots,2}_{\alpha-1}]| = \alpha, v_{\alpha} = |[3,\underbrace{2,\ldots,2}_{\beta}]| = 2\beta + 3\\
u_{\alpha+1} = |[\underbrace{2,\ldots,2}_{\alpha}]| = \alpha+1, v_{\alpha+1}= |[\underbrace{2,\ldots,2}_{\beta}]| = \beta + 1.\\
\end{array}
\end{displaymath}
Then, $u_{\alpha}+u_{\alpha+1}+v_{\alpha}+v_{\alpha+1}=2 \alpha + 3\beta + 5$.
Using Lemma \ref{uv} (3), we obtain
$$q= v_{\alpha} u_{\alpha+1} - v_{\alpha+1}u_{\alpha} = (\alpha+1)(2\beta+3)-(\beta+1)\alpha=\alpha \beta + 2\alpha + 2\beta + 3.$$
Then, we have
$q -(u_{\alpha}+u_{\alpha+1}+v_{\alpha}+v_{\alpha+1}) = \alpha \beta - \beta - 2 = (\alpha - 1) \beta -2$.
If $\alpha \beta - \beta -2 <0$, then $(\alpha, \beta)=(1,1)$ or $(2,1)$. These cases are exactly $[2,2,3,2]$ and $[2,3,2]$, and we can easily check that there is no redundant point on $S$ (see Table \ref{A.A}). Otherwise, we have $q \geq (u_{\alpha}+u_{\alpha+1}+v_{\alpha}+v_{\alpha+1})$.

(2) We have
\begin{displaymath}
\begin{array}{l}
u_{\alpha+1} =|[\underbrace{2,\ldots,2}_{\alpha}]| = \alpha+1, v_{\alpha+1}=|[\underbrace{2,\ldots,2}_{\beta},3,\underbrace{2,\ldots,2}_{\gamma}]|=\beta\gamma + 2\beta + 2\gamma + 3\\
u_{\alpha+2} =|[\underbrace{2,\ldots,2}_{\alpha},3]| = 2\alpha+3, v_{\alpha+2}=|[\underbrace{2,\ldots,2}_{\beta-1},3,\underbrace{2,\ldots,2}_{\gamma}]|=\beta\gamma + 2\beta + \gamma + 1.
\end{array}
\end{displaymath}
Then, $u_{\alpha+1}+u_{\alpha+2}+v_{\alpha+1}+v_{\alpha+2} = 2\beta \gamma + 3\alpha + 4\beta + 3 \gamma + 8$. Using Lemma \ref{uv} (4), we obtain
\begin{eqnarray*}
q     &=& |[\underbrace{2,\ldots,2}_{\alpha+\beta+1},3,\underbrace{2\ldots,2}_{\gamma}]| + u_{\alpha+1}v_{\alpha+1}\\
      &=& (\alpha+\beta+1)\gamma + 2 (\alpha+\beta+1) + 2 \gamma + 3 + (\alpha+1)(\beta \gamma + 2 \beta + 2 \gamma + 3)\\
      & =& \alpha \beta \gamma + 2\alpha \beta + 3 \alpha \gamma + 2 \beta\gamma + 5\alpha + 4\beta + 5\gamma + 8.
\end{eqnarray*}
It immediately follows that $q \geq u_{\alpha+1}+u_{\alpha+2}+v_{\alpha+1}+v_{\alpha+2}$.

(3) We have
\begin{displaymath}
\begin{array}{l}
u_{\alpha}=|[\underbrace{2,\ldots,2}_{\alpha-1}]|=\alpha, v_{\alpha}=|[4,\underbrace{2,\ldots,2}_{\beta}]|= 3\beta + 4 \\
u_{\alpha+1} = |[\underbrace{2,\ldots,2}_{\alpha}]|=\alpha+1, v_{\alpha+1}= |[\underbrace{2,\ldots,2}_{\beta}]|=\beta+1.
\end{array}
\end{displaymath}
Then, $u_{\alpha}+u_{\alpha+1}+v_{\alpha}+v_{\alpha+1}=2\alpha + 4\beta + 6$.  Using Lemma \ref{uv} (4), we obtain
$$q = |[\underbrace{2,\ldots,2}_{\alpha},3,\underbrace{2,\ldots,2}_{\beta}]| + u_{\alpha+1}v_{\alpha+1}=2\alpha \beta + 3\alpha + 3 \beta + 4.$$
Then, we have $q - (u_{\alpha}+u_{\alpha+1}+v_{\alpha}+v_{\alpha+1}) = 2\alpha \beta+\alpha - \beta - 2$.
If $2 \alpha \beta + \alpha - \beta -2 <0$, then $( \alpha, \beta )=(0,0), (1,0)$. These cases are exactly $[2,4]$ and $[4]$, and we can easily check that there is no redundant point on $S$ (see Table \ref{A.A}). Otherwise, we have $q \geq (u_{\alpha}+u_{\alpha+1}+v_{\alpha}+v_{\alpha+1})$.
\end{proof}

\begin{table}[ht]
\caption{}\label{A.A}
\renewcommand\arraystretch{1.5}
\noindent\[
\begin{array}{|l|l|l|l|l|}
\hline
\text{singularities} &  [2,2,3,2]  & [2,3,2] &  [2,4] & [n] \\
\hline
\text{discrepancies} &\left( \frac{2}{11}, \frac{4}{11}, \frac{6}{11}, \frac{3}{11} \right)  & \left( \frac{1}{4},\frac{1}{2}, \frac{1}{4} \right)  &\left( \frac{2}{7}, \frac{4}{7} \right) & \left( \frac{n-2}{n} \right) (n \geq 2)\\
\hline
\end{array}
\]
\end{table}

\begin{lemma}\label{A2}
Suppose that for $[n_1, \ldots,n_j,\ldots,n_k,\ldots  n_l]$, there are integers $j$ and $k$ with $1 \leq j \leq k \leq l-1$ such that $n_j \geq 3$ and $u_j + u_{j+1} +v_{j}+v_{j+1} \leq q$. Let
$$[n_1', \ldots,n_j',\ldots,n'_{k-1},n_k',n'_{k+1},\ldots ,  n_l']:=[n_1, \ldots,n_j,\ldots,n_{k-1},n_k+1,n_{k+1} , \ldots , n_l].$$
Then, we have $u_j' + u_{j+1}' +v_{j}'+v_{j+1}' \leq q'$.
\end{lemma}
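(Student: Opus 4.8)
The plan is to track the single quantity $\Delta := q - (u_j + u_{j+1} + v_j + v_{j+1})$ and show it does not decrease under $n_k \mapsto n_k+1$; since the hypothesis is exactly $\Delta \geq 0$ and the conclusion is exactly $\Delta' \geq 0$, it suffices to prove $\Delta' \geq \Delta$. Writing $\sigma := u_j + u_{j+1} + v_j + v_{j+1}$, so that $\Delta = q - \sigma$ and $\Delta' = q' - \sigma'$, the first step is to record the change in $q$: by Lemma \ref{uv}(4) applied with $i = k$ we have $q' = u_k v_k + q$, hence $q' - q = u_k v_k$. Therefore $\Delta' - \Delta = u_k v_k - (\sigma' - \sigma)$, and the entire lemma reduces to the single inequality $\sigma' - \sigma \leq u_k v_k$.

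The second step is to record how the continuant data moves. The structural point is that $u_s = |[n_1, \ldots, n_{s-1}]|$ depends only on the entries left of position $s$ and $v_s = |[n_{s+1}, \ldots, n_l]|$ only on those to the right, so $u_s' = u_s$ for $s \leq k$ and $v_s' = v_s$ for $s \geq k$. For the indices that do move, applying Lemma \ref{uv}(4) to the relevant sub-chain gives $v_s' = v_s + v_k\,|[n_{s+1}, \ldots, n_{k-1}]|$ for $s \leq k-1$ and $u_s' = u_s + u_k\,|[n_{k+1}, \ldots, n_{s-1}]|$ for $s \geq k+1$, with the convention $|[\emptyset]| = 1$. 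Since $1 \leq j \leq k$, only three configurations occur. If $j = k$, then $u_j, v_j, v_{j+1}$ are unchanged and $u_{j+1}' = u_{k+1} + u_k$, so $\sigma' - \sigma = u_k \leq u_k v_k$ because $v_k \geq 1$. If $j = k-1$, then $u_j, u_{j+1}, v_{j+1}$ are unchanged and $v_j' = v_j + v_k$, so $\sigma' - \sigma = v_k \leq u_k v_k$ because $u_k \geq 1$.

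The remaining case $j \leq k-2$ carries the real content. Here $u_j, u_{j+1}$ are unchanged while both $v_j$ and $v_{j+1}$ move, giving $\sigma' - \sigma = v_k (A+B)$ with $A := |[n_{j+1}, \ldots, n_{k-1}]|$ and $B := |[n_{j+2}, \ldots, n_{k-1}]|$, so the target becomes $A + B \leq u_k$. To prove it I would use the continuant splitting identity, which expresses $u_k = |[n_1, \ldots, n_{k-1}]|$ split at position $j$ as $u_k = u_{j+1} A - u_j B$ (a standard identity, provable from the $2\times 2$ transfer-matrix description of $|[\,\cdot\,]|$ and of the same flavour as Lemma \ref{uv}(4)). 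Then $A + B \leq u_k$ is equivalent to $(u_{j+1}-1)A \geq (u_j+1)B$. From the recursion in Lemma \ref{uv}(2) one extracts the elementary facts $A \geq B \geq 1$ (a continuant does not increase under left-truncation) and $u_{s+1} - u_s \geq 1$ for all $s \geq 0$; combining these with $n_j \geq 3$ yields $u_{j+1} = n_j u_j - u_{j-1} \geq 3u_j - u_{j-1} = u_j + (2u_j - u_{j-1}) \geq u_j + 2$, where $2u_j - u_{j-1} = u_j + (u_j - u_{j-1}) \geq 2$. Since $A \geq B$ and $u_{j+1} - 1 \geq u_j + 1$, we get $(u_{j+1}-1)A \geq (u_{j+1}-1)B \geq (u_j+1)B$, as required.

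The main obstacle is precisely this last inequality $A + B \leq u_k$: it is the only place the hypothesis $n_j \geq 3$ is used, and it genuinely fails when $n_j = 2$ (then $u_{j+1}$ may exceed $u_j$ by only $1$). The cleanest route is the splitting identity $u_k = u_{j+1}A - u_j B$, which isolates $u_{j+1}$, the quantity into which $n_j$ feeds. Beyond that, the only thing requiring care is the bookkeeping of which of $u_j, u_{j+1}, v_j, v_{j+1}$ actually change across the three cases $j=k$, $j=k-1$, $j\leq k-2$, together with the empty-chain conventions at the boundary indices.
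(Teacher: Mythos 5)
Your proof is correct and follows essentially the same route as the paper: the same reduction via $q'-q=u_kv_k$ to the single inequality $\sigma'-\sigma\le u_kv_k$, and the same three-way case split according to the position of $k$ relative to $j$. The only (cosmetic) difference is in the case $j\le k-2$, where you derive the key bound $|[n_{j+1},\ldots,n_{k-1}]|+|[n_{j+2},\ldots,n_{k-1}]|\le u_k$ from the continuant splitting identity, whereas the paper obtains it by applying Lemma \ref{uv}(5) to the subchain $[n_j,\ldots,n_{k-1}]$ (which is exactly where $n_j\ge 3$ enters) together with $|[n_j,\ldots,n_{k-1}]|\le u_k$.
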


\begin{proof}
By Lemma \ref{uv}, $q'=q+u_kv_k$. It suffices to show that $u_j' + u_{j+1}' +v_{j}'+v_{j+1}' \leq q+u_kv_k$. In order to calculate $u_j', u_{j+1}', v_{j}'$, and $v_{j+1}'$, we divide it into three cases.\\[5pt]
Case 1: $j+2 \leq k$.

We have $u_{j}' = u_j$ and $u_{j+1}'= u_{j+1}$. Moreover, we have
\begin{displaymath}
\begin{array}{l}
v_{j}'=|[n_{j+1},\ldots,n_{k-1},n_{k}+1,n_{k+1},\ldots, n_l]| = v_j + v_k|[n_{j+1},\ldots, n_{k-1}]| \text{ and} \\
v_{j+1}'=|[n_{j+2},\ldots,n_{k-1},n_{k}+1,n_{k+1},\ldots, n_l]| = v_{j+1} + v_k|[n_{j+2},\ldots, n_{k-1}]|
\end{array}
\end{displaymath}
Then, $u_j' + u_{j+1}' +v_{j}'+v_{j+1}' \leq q'$ is equivalent to
\begin{displaymath}
\begin{array}{l}
 u_j + u_{j+1} +v_{j}+v_{j+1} +v_k(|[n_{j+1},\ldots, n_{k-1}]|+|[n_{j+2},\ldots, n_{k-1}]|)\\
 \leq q + u_k v_k = q+v_k |[n_{j},\ldots, n_{k-1}]|. \\
\end{array}
\end{displaymath}
The above inequality always holds by the assumption and Lemma \ref{uv} (5).\\[5pt]
Case 2: $j+1 = k$.

We have $u_{j}' =u_j$ and $u_{j+1}'=u_{j+1}$. Moreover, we have
$$v_{j}'=|[n_{j+1}+1,n_{j+2},\ldots, n_l]| = v_{j} + v_{j+1} \text{ and } v_{j+1}'=|[n_{j+2},\ldots, n_l]| = v_{j+1}.$$
Since $v_{j+1} \leq u_{j+1}v_{j+1}$, we obtain $ u_j' + u_{j+1}' +v_{j}'+v_{j+1}' \leq q + u_{j+1}v_{j+1}$.\\[5pt]
Case 3: $j=k$.

We have $u_{j}'=u_j$ and $u_{j+1}'=|[n_1,\ldots,n_j +1]|=u_{j+1}+u_j$. Moreover, we also have $v_{j}'= v_{j}$ and $v_{j+1}'= v_{j+1}$. As in Case 2, we obtain $ u_j' + u_{j+1}' +v_{j}'+v_{j+1}' \leq q + u_{j+1}v_{j+1}$.
\end{proof}

\begin{proof}[Proof of Proposition \ref{A}]
First, we introduce notations for simplicity. Fix a natural number $l$. For integers $n_1, \ldots, n_l \geq 2$ and $n_1', \ldots, n_l' \geq 2$, we write $[n_1, \ldots, n_l] < [n_1', \ldots, n_l']$ if and only if $n_i \leq n_i'$ for all $1 \leq i \leq l$ and $n_j < n_j'$ for some $1 \leq j \leq l$.

Suppose that $s \in \bar{S}$ is the singularity $[n_1,\ldots,n_l]$ other than
$[\underbrace{2,\ldots,2}_{\alpha}] (\alpha \geq 1)$, $[\underbrace{2,\ldots,2}_{\alpha},3] (\alpha \geq 1)$, $[2,2,3,2]$, $[2,3,2]$, $[2,4]$, or $[n]$ for $n \geq 3$, in which cases $S$ has no redundant point.
Then, $[n_1,\ldots,n_l]$ is greater than or equal to
$[\underbrace{2,\ldots,2}_{\alpha},3, \underbrace{2,\ldots,2}_{\beta}] (\alpha$$ \geq \beta \geq 1)$
(not equal to $[2,2,3,2]$ and $[2,3,2]$),
$[\underbrace{2,\ldots,2}_{\alpha},3, \underbrace{2,\ldots,2}_{\beta}, 3, \underbrace{2,\ldots,2}_{\gamma}]$ $ (\alpha \geq 0, \beta \geq 1 \text{ and } \gamma \geq 0)$,
$[\underbrace{2,\ldots,2}_{\alpha},4, \underbrace{2,\ldots,2}_{\beta}] $ $(\alpha \geq \beta \geq 0)$
(not equal to $[2,4]$ and $[4]$),
$[2,2,3,3]$, $[2,3,3,2]$, $[2,3,3]$, $[3,3]$, or $[2,5]$, in which cases $S$ has a redundant point thanks to Lemma \ref{A1} and Table \ref{A.A.A}. Thus, by Lemma \ref{A2}, $[n_1,\ldots,n_l]$ has a redundant point.
\end{proof}

\begin{table}[ht]
\caption{}\label{A.A.A}
\renewcommand\arraystretch{1.5}
\noindent\[
\begin{array}{|l|l|l|l|l|l|}
\hline
\text{singularities} &  [2,2,3,3]  & [2,3,3,2] & [2,3,3] &  [3,3] & [2,5] \\
\hline
\text{discrepancies}
&\left( \frac{2}{9}, \frac{4}{9}, \frac{6}{9}, \frac{5}{9} \right)
& \left( \frac{1}{3},\frac{2}{3}, \frac{2}{3}, \frac{1}{3} \right)
&\left( \frac{4}{13}, \frac{8}{13}, \frac{7}{13} \right)
& \left( \frac{1}{2}, \frac{1}{2} \right)
& \left( \frac{1}{3}, \frac{2}{3} \right)\\
\hline
\end{array}
\]
\end{table}

\subsection{$D_{q,q_1}$-type}

The dual graph of  a log terminal singularity $s \in \bar{S}$ of $D_{q,q_1}$-type is

\begin{tikzpicture}[line cap=round,line join=round,>=triangle 45,x=1.0cm,y=1.0cm]
\clip(-4.3,1.1) rectangle (7.28,3.8);
\draw (0.22,2.78)-- (0.22,1.78);
\draw (-0.78,1.78)-- (0.22,1.78);
\draw (0.22,1.78)-- (1.22,1.78);
\draw (2.52,1.78)-- (3.52,1.78);
\draw (0.22,3.28) node[anchor=north west] {$E_{l+2}$};
\draw (-1.12,2.3) node[anchor=north west] {$E_{l+1}$};
\draw (0.32,2.3) node[anchor=north west] {$E_0$};
\draw (1.12,2.3) node[anchor=north west] {$E_1$};
\draw (2.3,2.3) node[anchor=north west] {$E_{l-1}$};
\draw (3.32,2.3) node[anchor=north west] {$E_l$};
\draw (0.25,2.8) node[anchor=north west] {$-2$};
\draw (-1.04,1.65) node[anchor=north west] {$-2$};
\draw (0,1.65) node[anchor=north west] {$-b$};
\draw (0.96,1.65) node[anchor=north west] {$-n_1$};
\draw (2.2,1.65) node[anchor=north west] {$-n_{l-1}$};
\draw (3.3,1.65) node[anchor=north west] {$-n_l$};
\draw (1.56,1.92) node[anchor=north west] {$\cdots$};
\begin{scriptsize}
\fill [color=black] (-0.78,1.78) circle (2.5pt);
\fill [color=black] (0.22,1.78) circle (2.5pt);
\fill [color=black] (0.22,2.78) circle (2.5pt);
\fill [color=black] (1.22,1.78) circle (2.5pt);
\fill [color=black] (2.52,1.78) circle (2.5pt);
\fill [color=black] (3.52,1.78) circle (2.5pt);
\end{scriptsize}
\end{tikzpicture}\\
where $\frac{q}{q_1} = [n_1, \ldots, n_l]$, and $b \geq 2$ and $n_i \leq 2$ are integers for all $1 \leq i \leq l$. The matrix equation for each discrepancy $a_i$ is given by
\begin{displaymath}
\left( \begin{array}{cccccccc}
-2 & 0 & 1 & 0 & 0 &0& \cdots & 0 \\
0 & -2 & 1 & 0 & 0 &0& \cdots & 0 \\
1 & 1 & -b & 1 & 0 &0& \cdots &0 \\
0 & 0 & 1 & -n_1 & 1 &0& \cdots & 0 \\
0 & 0 & 0 & 1 & -n_2 & 1 &\cdots & 0 \\
0 & 0 & 0 & 0 & 1 & -n_3 &\cdots & 0 \\
\vdots & \vdots & \vdots & \vdots & \vdots & \vdots & \ddots & \vdots\\
0 & 0 & 0 & 0 & 0 & \cdots  & 1 & -n_l \\
\end{array} \right)
\left( \begin{array}{c}
a_{l+1}\\
a_{l+2}\\
a_0\\
a_1\\
a_2\\
a_3\\
\vdots\\
a_l\\
\end{array} \right)
=-\left( \begin{array}{c}
0\\
0\\
b-2\\
n_1-2\\
n_2-2\\
n_3-2\\
\vdots\\
n_l-2\\
\end{array} \right).
\end{displaymath}

\begin{lemma}[{\cite[Lemma 3.7]{HK11}}]\label{lemD}
We have the following.
 \begin{enumerate}
    \item $a_0 = 1 - \frac{1}{(b-1)q-q_1}$ and $a_{l+1}=a_{l+2}= \frac{1}{2}a_0$.
    \item $a_1 = 1 - \frac{b-1}{(b-1)q-q_1}$.
    \item $a_l = 1 - \frac{(b-1)q_l - q_{1,l}}{(b-1)q - q_1}$ for $l \geq 2$.
 \end{enumerate}
\end{lemma}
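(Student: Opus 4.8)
The plan is to solve the displayed $(l+3)\times(l+3)$ linear system directly, reducing it to the already-understood $A_{q,q_1}$-type computation. First I would read off the two rows coming from the $(-2)$-curves $E_{l+1}$ and $E_{l+2}$, namely $-2a_{l+1}+a_0=0$ and $-2a_{l+2}+a_0=0$; these give $a_{l+1}=a_{l+2}=\tfrac12 a_0$ at once, disposing of the second half of (1). Substituting them into the row for the central curve $E_0$, namely $a_{l+1}+a_{l+2}-ba_0+a_1=-(b-2)$, collapses it to the single relation
$$a_1=(b-1)a_0-(b-2). \qquad (\star)$$

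Next I would isolate the equations indexed by $E_1,\dots,E_l$. After moving the $a_0$-term in the $E_1$-row to the right-hand side, these read
$$M(-n_1,\dots,-n_l)\,(a_1,\dots,a_l)^{T}=-(n_1-2,\dots,n_l-2)^{T}-a_0\,e_1,$$
where $e_1$ is the first standard basis vector. By linearity this expresses $(a_1,\dots,a_l)$ as the $A_{q,q_1}$-discrepancies $(a_1^{A},\dots,a_l^{A})$ of the chain $[n_1,\dots,n_l]$ --- given explicitly by Lemma \ref{uv}(1) --- minus $a_0$ times the first column of $M(-n_1,\dots,-n_l)^{-1}$. The only entries of that column I actually need are the first and the last, and a cofactor computation (using $\det M(-n_1,\dots,-n_l)=(-1)^l q$ together with the deletion-notation determinants) gives them as $-q_1/q$ and $-1/q$ respectively. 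Combining with $a_1^{A}=1-\frac{u_1+v_1}{q}=1-\frac{1+q_1}{q}$ turns $(\star)$ into one linear equation for $a_0$; clearing denominators, with $(b-1)q-q_1$ as the recurring quantity, yields $a_0=1-\frac{1}{(b-1)q-q_1}$, which is (1).

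Finally, parts (2) and (3) are back-substitutions. Plugging $a_0$ into $(\star)$ gives $a_1=1-\frac{b-1}{(b-1)q-q_1}$ immediately. For $a_l$ I would use $a_l=a_l^{A}+\frac{a_0}{q}$ with $a_l^{A}=1-\frac{u_l+v_l}{q}=1-\frac{q_l+1}{q}$ (recall $u_l=q_l$ and $v_l=1$), and then simplify; the one non-formal ingredient here is the standard continuant identity $q\,q_{1,l}-q_1 q_l=-1$ (equivalently $|[b-1,n_1,\dots,n_{l-1}]|=(b-1)q_l-q_{1,l}$), which rewrites the expression as $1-\frac{(b-1)q_l-q_{1,l}}{(b-1)q-q_1}$, giving (3).

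The hard part is not conceptual but bookkeeping: computing the two cofactors of $M(-n_1,\dots,-n_l)^{-1}$ with the correct signs, and invoking the right continuant identity for the $a_l$-formula. Everything else is linear elimination plus the already-proved Lemma \ref{uv}, so once the sign conventions for $q,q_1,q_l,q_{1,l}$ are pinned down the three formulas drop out.
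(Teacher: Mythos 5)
Your proof is correct, and it is worth noting that the paper itself gives no argument for this lemma at all: it is imported verbatim as a citation of \cite[Lemma 3.7]{HK11}. So your derivation is a genuinely self-contained alternative rather than a variant of the paper's route. The elimination is sound: the two $(-2)$-rows give $a_{l+1}=a_{l+2}=\tfrac12 a_0$, the $E_0$-row collapses to $a_1=(b-1)a_0-(b-2)$, and the remaining $l$ rows do reduce to $M(-n_1,\dots,-n_l)(a_1,\dots,a_l)^T=-(n_1-2,\dots,n_l-2)^T-a_0e_1$, so that $(a_1,\dots,a_l)$ differs from the $A_{q,q_1}$-discrepancies by $-a_0$ times the first column of $M^{-1}$. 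Your two cofactors are right: $(M^{-1})_{11}=(-1)^{l-1}q_1/((-1)^lq)=-q_1/q$, and deleting row $1$ and column $l$ leaves a unitriangular matrix, so $(M^{-1})_{l1}=(-1)^{1+l}/((-1)^lq)=-1/q$. Feeding $a_1=1-\tfrac{1+q_1}{q}+\tfrac{a_0q_1}{q}$ into $(\star)$ gives $a_0\bigl((b-1)q-q_1\bigr)=(b-1)q-q_1-1$, hence (1), and (2) follows by back-substitution. For (3), the reduction of $a_l=1-\tfrac{q_l+1}{q}+\tfrac{a_0}{q}$ to the stated form rests exactly on the continuant identity $q\,q_{1,l}-q_1q_l=-1$, which is correct (it is the $2\times 2$ determinant identity for continuants, checkable on $[2,2,2]$: $4\cdot 2-3\cdot 3=-1$); your parenthetical ``equivalently'' is a little loose, since $\lvert[b-1,n_1,\dots,n_{l-1}]\rvert=(b-1)q_l-q_{1,l}$ is a separate (also true) expansion that merely interprets the numerator, but this does not affect the argument. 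The payoff of your approach is that the reader need not chase the reference; the cost is the sign bookkeeping you already flagged, which you handled correctly.
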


\begin{proposition}\label{D}
Let $s \in \bar{S}$ be a log terminal singularity of $D_{q,q_1}$-type. Then, we have the following four cases.
 \begin{enumerate}
    \item If $b \geq 3$, then $a_0 + a_{l+1} \geq 1$.
    \item If $b = 2$ and $q \geq q_1 + 3$, then $a_0 + a_{l+1} \geq 1$.
    \item If $b = 2$ and $q = q_1 + 2$, then $a_0=a_1=\frac{1}{2}$.
    \item If $b = 2$ and $q = q_1 + 1$, then $s$ is a canonical singularity.
 \end{enumerate}
In particular, the minimal resolution $S$ of $\bar{S}$ always has a redundant point, unless $s$ is a canonical singularity.
\end{proposition}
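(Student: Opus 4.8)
The plan is to read the four cases directly off the discrepancy formulas of Lemma \ref{lemD} and then assemble them into the ``in particular'' claim by locating redundant points in the dual graph. The key observation is that the central curve $E_0$ meets both $E_{l+1}$ and $E_1$; indeed, the $E_0$-row of the matrix equation has $1$'s in the $E_{l+1}$-, $E_{l+2}$-, and $E_1$-columns. Hence, by Definition \ref{sing} and the redundancy criterion recalled at the start of this section, to exhibit a redundant point it suffices to show $a_0 + a_{l+1} \geq 1$ or $a_0 + a_1 \geq 1$. I abbreviate $D := (b-1)q - q_1$ and record the elementary bounds $q > q_1 \geq 1$ and $q \geq 2$, which come from $q/q_1 = [n_1, \ldots, n_l]$ with every $n_i \geq 2$.

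For cases (1) and (2) I would use Lemma \ref{lemD}(1) to write
\[
a_0 + a_{l+1} = \tfrac{3}{2}\, a_0 = \tfrac{3}{2}\Bigl(1 - \tfrac{1}{D}\Bigr),
\]
so that $a_0 + a_{l+1} \geq 1$ is equivalent to $D \geq 3$; it then remains only to verify this bound. If $b \geq 3$ then $D \geq 2q - q_1 = q + (q - q_1) \geq q + 1 \geq 3$, giving (1), while if $b = 2$ and $q \geq q_1 + 3$ then $D = q - q_1 \geq 3$, giving (2). In either situation the point $E_0 \cap E_{l+1}$ is redundant.

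For case (3), $b = 2$ and $q = q_1 + 2$ give $D = 2$, and Lemma \ref{lemD}(1),(2) then yield $a_0 = \tfrac12$ and $a_1 = 1 - \tfrac{b-1}{D} = \tfrac12$ at once, so $a_0 + a_1 = 1$ and the point $E_0 \cap E_1$ is redundant. Case (4), in which $b = 2$ and $q = q_1 + 1$ so $D = 1$, is the step I expect to be the main obstacle: the formulas only give $a_0 = a_1 = a_{l+1} = 0$ directly, and I must upgrade this to $a_i = 0$ for all $i$. The plan is to prove that $q = q_1 + 1$ forces $n_1 = \cdots = n_l = 2$. Setting $v_0 = q$ and $v_1 = q_1 = q - 1$, so that $v_0 - v_1 = 1$, I would run an induction using the recursion $v_{i-1} = n_i v_i - v_{i+1}$ of Lemma \ref{uv}(2) together with the strict monotonicity $v_0 > v_1 > \cdots > v_l = 1$: if $v_{i-1} - v_i = 1$ then $v_{i+1} = (n_i - 1)v_i - 1$, and $v_{i+1} < v_i$ together with $v_i \geq 1$ forces $n_i = 2$ and $v_i - v_{i+1} = 1$, continuing the induction. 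Once every $n_i = 2$ and $b = 2$, the right-hand side of the matrix equation is identically zero, and since the (negative definite) intersection matrix is invertible the unique solution is $a_i = 0$ for all $i$; thus $s$ is canonical.

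Finally, for the ``in particular'' statement I would observe that $b \geq 3$, together with $b = 2$ split according to $q - q_1 \in \{1, 2, \geq 3\}$ (recall $q > q_1$), exhausts all possibilities; cases (1)--(3) each produce an intersection point with discrepancy-sum at least $1$, hence a redundant point on $S$, while the sole remaining case (4) is precisely when $s$ is canonical. The difficulty is therefore entirely concentrated in case (4): the other three cases are single substitutions into Lemma \ref{lemD}, whereas (4) needs the structural characterization of the all-$(-2)$ chain by $q = q_1 + 1$, for which the continued-fraction recursion of Lemma \ref{uv} does the real work.
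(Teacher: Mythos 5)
Your proposal is correct and follows essentially the same route as the paper: cases (1) and (2) reduce to $(b-1)q-q_1\geq 3$ via Lemma \ref{lemD}(1), case (3) follows from Lemma \ref{lemD}(1)--(2), and case (4) is handled by showing $q=q_1+1$ forces $[n_1,\ldots,n_l]=[2,\ldots,2]$ by induction on the continued-fraction recursion. Your case (3) is slightly more direct (substituting into the $a_1$ formula rather than first deriving the shape $[2,\ldots,2,3]$), and your explicit statement that $q>q_1$ makes the four cases exhaustive is a welcome detail the paper leaves implicit, but these are cosmetic differences.
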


\begin{proof}
Using Lemma \ref{lemD}, we get
$$a_0 + a_{l+1} = \frac{3}{2}a_0 = \frac{3}{2}\left(1- \frac{1}{(b-1)q-q_1} \right) \geq 1$$
which is equivalent to
$$(b-1)q-q_1 \geq 3.$$
If $b \geq 3$, then
$$(b-1)q-q_1 \geq 2q-q_1 > q \geq 2.$$
Thus, $(b-1)q-q_1 \geq 3$, that is, $a_0 + a_{l+1} \geq 1$, which proves (1).

Suppose that $b=2$. If $q-q_1 \geq 3$, then it is still true that $a_0 + a_{l+1} \geq 1$. This proves (2).

Now, we assume that $b=2$ and $q=q_1+2$. The condition $q=q_1+2$ is equivalent to $(n_1 - 1)q_1 = q_{1,2} + 2$. By simple calculation, we conclude that $n_1=2$ and $q_1 = q_{1,2} + 2$. By induction,
$$[n_1, \ldots, n_{l-1}, n_l] = [2, \ldots, 2, 3].$$
Using Lemma \ref{lemD}, we have $a_0=a_1=\frac{1}{2}$, which proves (3).

Finally, we assume that $b=2$ and $q=q_1 + 1$. As in the proof of (3), we obtain
$$[n_1, \ldots, n_l] = [2, \ldots, 2, 2].$$
Thus, $y$ is a canonical singularity of type $D_{l+3}$, which proves (4).
\end{proof}

\subsection{$T_m$, $O_m$, and $I_m$-types}

We use the notation $\langle b;q,q_1 ; q',q_1' \rangle$ to refer to the following dual graph

\begin{tikzpicture}[line cap=round,line join=round,>=triangle 45,x=1.0cm,y=1.0cm]
\clip(-4.3,0.0) rectangle (7.28,4.7);
\draw (-0.16,4.02)-- (-0.16,3.02);
\draw (-0.16,1.62)-- (-0.16,0.62);
\draw (-1.16,0.62)-- (-0.16,0.62);
\draw (-0.16,0.62)-- (0.84,0.62);
\draw (2.24,0.62)-- (3.24,0.62);
\draw (-0.32,2.78) node[anchor=north west] {$\vdots$};
\draw (1.2,0.78) node[anchor=north west] {$\cdots$};
\draw (-1,4.18) node[anchor=north west] {$E_l$};
\draw (-1.1,3.18) node[anchor=north west] {$E_{l-1}$};
\draw (-1.1,1.74) node[anchor=north west] {$E_{k+1}$};
\draw (-1.55,1.2) node[anchor=north west] {$E_1$};
\draw (-1.4,0.55) node[anchor=north west] {$-2$};
\draw (-0.1,1.2) node[anchor=north west] {$E_0$};
\draw (0.68,1.2) node[anchor=north west] {$E_2$};
\draw (2,1.2) node[anchor=north west] {$E_{k-1}$};
\draw (3.12,1.2) node[anchor=north west] {$E_k$};
\draw (-0.3,0.55) node[anchor=north west] {$-b$};
\draw (0.66,0.55) node[anchor=north west] {$-n_2$};
\draw (2,0.55) node[anchor=north west] {$-n_{k-1}$};
\draw (3.16,0.55) node[anchor=north west] {$-n_k$};
\draw (0.0,4.2) node[anchor=north west] {$-n_l$};
\draw (0.0,3.16) node[anchor=north west] {$-n_{l-1}$};
\draw (0.0,1.74) node[anchor=north west] {$-n_{k+1}$};
\begin{scriptsize}
\fill [color=black] (-1.16,0.62) circle (2.5pt);
\fill [color=black] (-0.16,0.62) circle (2.5pt);
\fill [color=black] (0.84,0.62) circle (2.5pt);
\fill [color=black] (-0.16,1.62) circle (2.5pt);
\fill [color=black] (-0.16,3.02) circle (2.5pt);
\fill [color=black] (-0.16,4.02) circle (2.5pt);
\fill [color=black] (2.24,0.62) circle (2.5pt);
\fill [color=black] (3.24,0.62) circle (2.5pt);
\end{scriptsize}
\end{tikzpicture}\\
where $b \geq 2$ and $n_i \geq 2$ are integers for all  $2 \leq i \leq l$, and $\frac{q}{q_1}=[n_2, \ldots, n_k]$ and $\frac{q'}{q_1'}=[n_{k+1},\ldots,n_l]$ are the Hirzebruch-Jung continued fractions. Each dual graph of a log terminal singularity $s \in \bar{S}$ of $T_m$, $I_m$, or $O_m$-types is one of three items from the top, eight items from the bottom, or the remaining items in Table \ref{T.T}, respectively.

For example, consider $\langle b;3,1;4,3\rangle$, which is of $O_m$-type.

\begin{tikzpicture}[line cap=round,line join=round,>=triangle 45,x=1.0cm,y=1.0cm]
\clip(-4.34,0.4) rectangle (7.24,3.8);
\draw (0.18,2.96)-- (0.18,1.96);
\draw (0.18,0.96)-- (0.18,1.96);
\draw (1.18,1.96)-- (0.18,1.96);
\draw (1.18,1.96)-- (2.18,1.96);
\draw (2.18,1.96)-- (3.18,1.96);
\draw (-0.6,3.16) node[anchor=north west] {$E_1$};
\draw (-0.6,2.1) node[anchor=north west] {$E_0$};
\draw (-0.6,1.2) node[anchor=north west] {$E_2$};
\draw (0.2,3.12) node[anchor=north west] {$-2$};
\draw (0.23,1.9) node[anchor=north west] {$-b$};
\draw (0.2,1.2) node[anchor=north west] {$-3$};
\draw (0.97,1.9) node[anchor=north west] {$-2$};
\draw (1.92,1.9) node[anchor=north west] {$-2$};
\draw (2.89,1.9) node[anchor=north west] {$-2$};
\draw (1,2.5) node[anchor=north west] {$E_3$};
\draw (1.94,2.48) node[anchor=north west] {$E_4$};
\draw (2.92,2.48) node[anchor=north west] {$E_5$};
\begin{scriptsize}
\fill [color=black] (0.18,2.96) circle (2.5pt);
\fill [color=black] (0.18,1.96) circle (2.5pt);
\fill [color=black] (0.18,0.96) circle (2.5pt);
\fill [color=black] (1.18,1.96) circle (2.5pt);
\fill [color=black] (2.18,1.96) circle (2.5pt);
\fill [color=black] (3.18,1.96) circle (2.5pt);
\end{scriptsize}
\end{tikzpicture}\\
The matrix equation for discrepancies $a_i$ is given by
\begin{displaymath}
\left( \begin{array}{cccccc}
-b & 1 & 1 & 1 & 0 & 0 \\
1  &-2 & 0 & 0 & 0 & 0 \\
1  & 0 &-3 & 0 & 0 & 0\\
1  & 0 & 0 &-2 & 1 & 0 \\
0  & 0 & 0 & 1 &-2 & 1 \\
0  & 0 & 0 & 0 & 1 &-2
\end{array} \right)
\left( \begin{array}{c} a_0 \\ a_1 \\a_2 \\ a_3 \\a_4 \\a_5 \end{array} \right) =
\left( \begin{array}{c} 2-b \\ 0 \\ -1 \\ 0 \\ 0 \\0 \end{array} \right).
\end{displaymath}
It is easy to see that the solution $(a_0, a_2, a_2, a_3, a_4, a_5)$ is
$$\left( \frac{12b-20}{12b-19} , \frac{6b-10}{12b-19} , \frac{8b-13}{12b-19} ,  \frac{9b-15}{12b-19} ,  \frac{6b-12}{12b-19} ,  \frac{3b-5}{12b-19}  \right).  $$
Since $b \geq 2$, we obtain $a_0 + a_1 \geq 1$ by the following computation:
$$ \frac{12b-20}{12b-19} + \frac{6b-10}{12b-19} =  \frac{18b-30}{12b-19} \geq 1$$
is equivalent to
$$ 6b \geq 11.$$
Similarly, we calculate all discrepancies for 15 cases. For the reader's convenience, we list discrepancies $(a_0, a_1, \ldots, a_l)$ of all possible cases in Table \ref{T.T}. It is easy to check that $a_0 + a_1 \geq 1$ for all cases provided that $b \geq 2$ and $y$ is not a canonical singularity. Thus, we obtain the following.

\begin{proposition}\label{TOI}
If $s \in \bar{S}$ is a log terminal singularity of one of $T_m$, $O_m$, and $I_m$-types, and $s$ is not a canonical singularity, then $a_0 + a_1 \geq 1$.
\end{proposition}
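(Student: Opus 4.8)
The plan is to reduce the two-variable inequality to a single condition on $a_0$, mirroring the treatment of the $D_{q,q_1}$-case in Proposition \ref{D}, and then to verify that condition over the finitely many dual-graph shapes. The first and crucial observation is uniform across all three families: in every graph $\langle b; q, q_1; q', q_1'\rangle$ the curve $E_1$ is a $(-2)$-curve meeting only the central curve $E_0$. Hence the row of the defining matrix equation corresponding to $E_1$ reads $a_0 (E_0.E_1) - 2 a_1 = -n_1 + 2 = 0$, so that $a_1 = \tfrac{1}{2} a_0$. This is the exact analogue of the relation $a_{l+1} = a_{l+2} = \tfrac{1}{2} a_0$ from Lemma \ref{lemD}, and it gives $a_0 + a_1 = \tfrac{3}{2} a_0$. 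Therefore the assertion $a_0 + a_1 \geq 1$ is equivalent to the single inequality $a_0 \geq \tfrac{2}{3}$.

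Next I would determine how $a_0$ varies with the central weight $b$ when the two nontrivial arms (equivalently the continued fractions $q/q_1$ and $q'/q_1'$) are held fixed. The parameter $b$ occurs in the intersection matrix only in the diagonal entry $-b$ and in the right-hand side only in the central entry $2 - b$. Solving for $a_0$ by Cramer's rule, both the relevant numerator and the determinant are affine in $b$; writing the central column in each as a fixed vector minus $b\,e_0$, the two $b$-linear terms cancel in the difference, so that $1 - a_0 = \tfrac{c}{N(b)}$ for a positive constant $c$ (equal to $1$ in the examples below) and an affine function $N(b)$ with positive slope. For instance, for $\langle b; 3,1; 4,3\rangle$ one finds $a_0 = 1 - \tfrac{1}{12b - 19}$, and for the $E_6$-shape $\langle b; 3,2; 3,2\rangle$ one finds $a_0 = 1 - \tfrac{1}{6b - 11}$. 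Within each fixed arm-shape the condition $a_0 \geq \tfrac{2}{3}$ thus becomes a single linear inequality $N(b) \geq 3c$, monotone in $b$, which I would evaluate using the continued-fraction identities of Lemma \ref{uv}.

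It then remains to run over the finitely many arm-shapes. By the Brieskorn--Alexeev classification the dual graphs of types $T_m$, $O_m$, $I_m$ comprise exactly fifteen shapes---three, four, and eight respectively---each obtained by fixing its two nontrivial arms and letting $b \geq 2$ vary; these are the rows of Table \ref{T.T}. For each shape I would compute $N(b)$ explicitly and check $N(b) \geq 3c$ for all $b \geq 2$, exactly as in the displayed example. The outcome is that the inequality holds in every case, the only failures being the three all-$(-2)$ shapes evaluated at $b = 2$, namely $E_6 = \langle 2; 3,2; 3,2\rangle$, $E_7 = \langle 2; 3,2; 4,3\rangle$, and $E_8 = \langle 2; 3,2; 5,4\rangle$, where all discrepancies vanish and $s$ is canonical. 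Since $s$ is assumed non-canonical, these are excluded and $a_0 \geq \tfrac{2}{3}$ follows in all remaining cases.

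The main obstacle is not any single hard estimate but the combinatorial bookkeeping: correctly reading off the fifteen arm-shapes from the classification and carrying out the fifteen Cramer/continued-fraction computations of $N(b)$. The one point requiring care is the interface between canonical and strictly log terminal singularities---one must confirm that the instances with $a_0 < \tfrac{2}{3}$ (equivalently $N(b) < 3c$) are exactly the canonical graphs $E_6, E_7, E_8$ at $b = 2$, so that excluding canonical singularities removes precisely the failures of the inequality and nothing further.
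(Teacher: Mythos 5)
Your proof is correct and follows essentially the same route as the paper: both reduce to the Brieskorn--Alexeev list of fifteen star-shaped dual graphs $\langle b;q,q_1;q',q_1'\rangle$ and verify the inequality case by case from the defining linear system for the discrepancies. Your additional structural observations---that the lone $(-2)$-arm forces $a_1=\tfrac{1}{2}a_0$ so the claim reduces to $a_0\geq\tfrac{2}{3}$, and that $1-a_0=c/N(b)$ with $N(b)$ affine and increasing in $b$---merely streamline the bookkeeping that the paper carries out by tabulating all discrepancies for each shape and checking $a_0+a_1\geq 1$ directly, and your identification of the only failures as the canonical graphs $E_6$, $E_7$, $E_8$ at $b=2$ matches the paper's conclusion.
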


\begin{table}[ht]
\caption{}\label{T.T}
\renewcommand\arraystretch{1.5}
\noindent\[
\begin{array}{|l|l|}
\hline
\hbox{singularities} & \hbox{discrepancies } (a_0, a_1, \ldots, a_l)\\
\hline
\langle b;3,1;3,1\rangle &  \left( \frac{6b-8}{6b-7} , \frac{3b-4}{6b-7} , \frac{4b-5}{6b-7}  ,  \frac{4b-5}{6b-7} \right)\\
\hline
\langle b;3,1;3,2\rangle & \left( \frac{6b-10}{6b-9} , \frac{3b-5}{6b-9}, \frac{12b-19}{18b-27}  ,  \frac{12b-20}{18b-27} ,  \frac{6b-10)}{18b-27)}\right) \\
\hline
\langle b;3,2;3,2\rangle & \left( \frac{6b-12}{6b-11} , \frac{3b-6}{6b-11}, \frac{4b-8}{6b-11}  ,  \frac{2b-4}{6b-11} , \frac{4b-8}{6b-11} ,  \frac{2b-4}{6b-11}  \right)\\
\hline
\langle b;3,2;4,3\rangle & \left( \frac{12b-24}{12b-23} , \frac{6b-12}{12b-23} , \frac{8b-16}{12b-23}  ,  \frac{4b-8}{12b-23},  \frac{9b-18}{12b-23},  \frac{6b-12}{12b-23},  \frac{3b-6}{12b-23} \right) \\
\hline
\langle b;3,1;4,3\rangle & \left( \frac{12b-20}{12b-19} , \frac{6b-10}{12b-19} , \frac{8b-13}{12b-19} , \frac{9b-15}{12b-19} ,  \frac{6b-10}{12b-19},  \frac{3b-5}{12b-19}  \right) \\
\hline
\langle b;3,2;4,1\rangle & \left( \frac{12b-18}{12b-17} , \frac{6b-9}{12b-17} , \frac{8b-12}{12b-17}  ,  \frac{4b-6}{12b-17} ,  \frac{9b-13}{12b-17} \right) \\
\hline
\langle b;3,1;4,1\rangle & \left( \frac{12b-14}{12b-13} , \frac{6b-7}{12b-13} ,   \frac{8b-9}{12b-13} , \frac{9b-10}{12b-13} \right) \\
\hline
\langle b;3,2;5,4\rangle & \left( \frac{30b-60}{30b-59} , \frac{15b-30}{30b-59} , \frac{20b-40}{30b-59}  ,  \frac{10b-20}{30b-59} , \frac{24b-48}{30b-59}, \frac{18b-36}{30b-59} ,  \frac{12b-24}{30b-59} ,  \frac{6b-12}{30b-59} \right) \\
\hline
\langle b;3,2;5,3\rangle & \left(\frac{30b-54}{30b-53} ,\frac{15b-27}{30b-53} , \frac{20b-36}{30b-53}  ,  \frac{10b-18}{30b-53} ,  \frac{24b-43}{30b-53} ,  \frac{18b-32}{30b-53} \right) \\
\hline
\langle b;3,1;5,4\rangle & \left( \frac{30b-50}{30b-49} , \frac{15b-25}{30b-49} , \frac{20b-33}{30b-49}  ,  \frac{24b-40}{30b-49} , \frac{18b-30}{30b-49} ,  \frac{12b-20}{30b-49} , \frac{6b-10}{30b-49}  \right) \\
\hline
\langle b;3,2;5,2\rangle & \left( \frac{30b-48}{30b-47} , \frac{15b-24}{30b-47} , \frac{20b-32}{30b-47}  ,  \frac{10b-16}{30b-47} , \frac{24b-38}{30b-47} , \frac{12b-19}{30b-47}  \right) \\
\hline
\langle b;3,1;5,3\rangle & \left(\frac{30b-44}{30b-43} , \frac{15b-22}{30b-43} , \frac{20b-29}{30b-43}  ,  \frac{24b-35}{30b-43} ,  \frac{18b-26}{30b-43} \right) \\
\hline
\langle b;3,2;5,1\rangle & \left(  \frac{30b-42}{30b-41} , \frac{15b-21}{30b-41} , \frac{20b-28}{30b-41}  ,  \frac{10b-14}{30b-41} ,  \frac{24b-33}{30b-41}  \right) \\
\hline
\langle b;3,1;5,2\rangle & \left(  \frac{30b-38}{30b-37} , \frac{15b-19}{30b-37} , \frac{20b-25}{30b-37}  ,  \frac{24b-30}{30b-37} ,  \frac{12b-15}{30b-37}  \right) \\
\hline
\langle b;3,1;5,1\rangle & \left( \frac{30b-32}{30b-31} , \frac{15b-16}{30b-31} , \frac{20b-21}{30b-31} , \frac{24b-25}{30b-31}  \right) \\
\hline

\end{array}
\]
\end{table}

We are ready to prove Theorem \ref{reddisc}.

\begin{proof}[Proof of Theorem \ref{reddisc}]
It is a consequence of Lemmas \ref{rtsing}, \ref{can}, Propositions \ref{A}, \ref{D}, and \ref{TOI}.
\end{proof}

\section{Examples of redundant blow-ups}\label{exredsec}

In this section, we construct various examples of redundant blow-ups. In particular, we prove Theorem \ref{nomin}, i.e., we answer Question \ref{tvvq}.

\subsection{Geometry of big anticanonical surfaces}\label{prelimbigsursubsec}

In this subsection, we briefly review some basic properties of big anticanonical surfaces.
A smooth projective surface $S$ is called a \emph{big anticanonical surface} if the anticanonical divisor $-K_S$ is big. Although not every anticanonical ring $R(-K_X):=\bigoplus_{m \geq 0} H^0 (\mathcal{O}_X(-mK_S)$ of a big anticanonical surface $S$ is finitely generated (\cite[Lemma 14.39]{B01}), we can always define the anticanonical model of $S$ using the Zariski decomposition $-K_S = P+N$. The morphism $f \colon S \rightarrow \bar{S}$ which contracts all curves $C$ such that $C.P=0$ is called the \emph{anticanonical morphism}, and $\bar{S}$ is called the \emph{anticanonical model} of $S$. Then, $\bar{S}$ is a normal projective surface (\cite[14.32]{B01}).

Note that the anticanonical ring $R(-K_S)$ of a big anticanonical \emph{rational} surface $S$ is always finitely generated, and the anticanonical model $\bar{S}=\Proj R(-K_S)$ is a del Pezzo surface, i.e., $-K_{\bar{S}}$ is an ample $\Q$-Cartier divisor. In this case, $\bar{S}$ contains rational singularities by \cite[Theorem 4.3]{Sak84}. Furthermore, we have the following.

\begin{proposition}[{\cite[Proposition 4.2]{Sak84}}]\label{redbigratsurf}
Let $S$ be a big anticanonical rational surface. Then, there is a sequence of redundant blow-ups $f \colon S \rightarrow S_0$, where $S_0$ is the minimal resolution of the anticanonical model of $S$.
\end{proposition}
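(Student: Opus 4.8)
The plan is to factor the anticanonical morphism $f$ through the minimal resolution $S_0$ and then to verify, by a descending induction, that each blow-up appearing in this factorization is redundant in the sense of Definition \ref{reddef}. As recalled above, $\bar{S} = \Proj R(-K_S)$ is a del Pezzo surface with only rational singularities, and I write $g \colon S_0 \to \bar{S}$ for its minimal resolution; by Lemma \ref{zar} the positive part of the Zariski decomposition of $-K_{S_0}$ is $P_0 = g^{*}(-K_{\bar{S}})$. Since $S$ is smooth and $f \colon S \to \bar{S}$ is birational, $f$ is a resolution of $\bar{S}$, so by the universal property of the minimal resolution it factors as $f = g \circ h$ for a birational morphism $h \colon S \to S_0$. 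As $h$ is a birational morphism of smooth projective surfaces, I decompose it into a sequence of point blow-ups
$$S = S_n \xrightarrow{f_n} S_{n-1} \to \cdots \xrightarrow{f_1} S_0,$$
where $f_i$ is the blow-up of $S_{i-1}$ with exceptional curve $E_i$; writing $g_i \colon S_i \to S_0$ for the composite morphism (so $g_n = h$ and $g_0 = \mathrm{id}$), the goal becomes to show that every $f_i$ is a redundant blow-up.

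Two preliminary observations feed the induction. First, since $-K_S$ is big and each projection $S \to S_i$ is a birational morphism, the pullback of $-K_{S_i}$ differs from $-K_S$ by an effective exceptional divisor and hence dominates the big divisor $-K_S$; thus $-K_{S_i}$ is big for every $0 \leq i \leq n$, and Remark \ref{redrem} is available at each stage. Second, because $P$ is semiample with $\bar{S}$ as its image, the positive part of the Zariski decomposition of $-K_S$ is the pullback of the ample anticanonical class of $\bar{S}$, namely $P = f^{*}(-K_{\bar{S}}) = h^{*}g^{*}(-K_{\bar{S}}) = g_n^{*}P_0$.

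Now I run the induction downward from $i = n$ to $i = 1$, with inductive hypothesis $P_i = g_i^{*}P_0$, where $-K_{S_i} = P_i + N_i$ denotes the Zariski decomposition; the base case $i = n$ is the identity $P_n = g_n^{*}P_0$ just established. Given $P_i = g_i^{*}P_0$, the curve $E_i$ is $g_i$-exceptional, so by the projection formula $P_i . E_i = g_i^{*}P_0 . E_i = P_0 . (g_i)_{*}E_i = 0$. Since $-K_{S_i}$ is big, Remark \ref{redrem} then forces $f_i$ to be a redundant blow-up. By Lemma \ref{redlem} this yields $P_i = f_i^{*}P_{i-1}$; comparing with $P_i = g_i^{*}P_0 = f_i^{*}g_{i-1}^{*}P_0$ and using that $f_i^{*}$ is injective on divisor classes gives $P_{i-1} = g_{i-1}^{*}P_0$, which is precisely the inductive hypothesis for $i-1$. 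Carrying this down to $i = 1$ shows that every $f_i$ is redundant, so $h$ is the desired sequence of redundant blow-ups.

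I expect the main obstacle to be the identification $P = f^{*}(-K_{\bar{S}})$ of the positive part with the pulled-back ample anticanonical class of the anticanonical model, together with the accompanying verification that $-K_{S_i}$ stays big along the whole tower so that Remark \ref{redrem} may be invoked at each step; these rest on the theory of the anticanonical model for big anticanonical rational surfaces (finite generation of $R(-K_S)$ and $\bar{S} = \Proj R(-K_S)$) recalled before the statement. By contrast, the factorization through $S_0$ and the descending induction itself are formal once these inputs are in place.
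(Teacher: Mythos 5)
Your proof is correct. The paper gives no argument of its own for this proposition---it is quoted directly from \cite[Proposition 4.2]{Sak84}---but your reconstruction (factoring the anticanonical morphism through the minimal resolution $S_0$, identifying $P = f^{*}(-K_{\bar{S}})$ so that $P_0 = g^{*}(-K_{\bar{S}})$ via Lemma \ref{zar}, checking that each $-K_{S_i}$ stays big, and then descending through the tower using $P_i.E_i = 0$ and Remark \ref{redrem} to conclude each $f_i$ is redundant) is exactly the expected route and is sound at every step.
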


\subsection{Redundant blow-ups of big anticanonical rational surfaces}\label{exredsubsec}

In this subsection, we give an explicit construction of big anticanonical rational surfaces containing redundant points.

Before giving constructions, we explain how these examples give a negative answer to Question \ref{tvvq}. Let $S$ be a big anticanonical rational surface admiting a redundant blow-up $f \colon \widetilde{S} \to S$. By Lemma \ref{redlem}, the anticanonical models $\bar{S}$ of $\widetilde{S}$ and $S$ are the same. Suppose that $\widetilde{S}$ is a minimal resolution of a del Pezzo surface. Then, this minimal resolution is nothing but the anticanonical morphism to $\bar{S}$. However, in view of Proposition \ref{redbigratsurf}, $\widetilde{S}$ cannot be the minimal resolution of $\bar{S}$ because the anticanonical morphism $\widetilde{S} \to \bar{S}$ factors through $S$. Thus, the existence of redundant points on some big anticanonical rational surface answers to Question \ref{tvvq}.

\begin{remark}
Let $S$ be a big anticanonical rational surface, and let $f \colon \widetilde{S} \rightarrow S$ be a blow-up at $p \in S$. Even though  $f$ is not a redundant blow-up,  $\widetilde{S}$ might be a big anticanonical rational surface. However, in this case, the anticanonical model of $\widetilde{S}$ is different from that of  $S$.
\end{remark}

In the following examples, we construct big anticanonical rational surfaces whose anticanonical models contain only log terminal singularities (Example \ref{ltex}) or contain at least one rational singularity that is not a log terminal singularity (Example \ref{ctex2}).

\begin{example}\label{ltex}
Let $\pi \colon  S(m,n) \rightarrow \P^2$ be the blow-up of $\P^2$ at $p_1^1, \ldots p_m^1$ on a line $\overline{l}_1$ and $p_1^2, \ldots p_n^2$ on a different line $\overline{l}_2$ for $m \geq n \geq 4$. Assume that all the chosen points are distinct and away from the intersection point of $\overline{l}_1$ and $\overline{l}_2$. Let $l_i$ be the strict transform of $\overline{l}_i$, $E_j^i$ be the exceptional divisors of $p_j^i$, and $l$ be the pull-back of a general line in $\P^2$. Then, the anticanonical divisor of $S(m,n)$ is given by
\[-K_{S(m,n)} = \pi^{*}(-K_{\P^2}) - \sum_{i,j}E_j^i = l+l_1 + l_2.\]
Let $-K_{S(m,n)} = P+N$ be the Zariski decomposition. Then, we have
$$N=\frac{mn-m-2n}{mn-m-n}l_1 + \frac{mn-2m-n}{mn-m-n}l_2.$$
It is easy to see that $S(m,n)$ is a big anticanonical rational surface. By contracting curves $l_1$ and $l_2$ on $S(m,n)$, we obtain the anticanonical model of $S(m,n)$ which contains only log terminal singularities. Now, the intersection point $p$ of $l_1$ and $l_2$ is a redundant point because
$$ \mult_p N=\frac{mn-m-2n}{mn-m-n} + \frac{mn-2m-n}{mn-m-n} \geq 1 .$$
Thus, by blowing up at $p$, we obtain a redundant blow-up $f \colon  \widetilde{S}(m,n) \rightarrow S(m,n)$. The Picard number of $\widetilde{S}(m,n)$ is $m+n+2$, and hence, it supports Theorem \ref{nomin}.
\end{example}

Singularities on log del Pezzo surfaces are classified in some cases (see e.g., \cite{AN}, \cite{Na}, \cite{K}). Thus, we can find redundant points on the minimal resolution of log del Pezzo surfaces.

\begin{example}\label{ctex2}
Let $h \colon \mathbb{F}_n \rightarrow \mathbb{P}^1$ be the Hirzebruch surface with a section $\overline{\sigma}$ of self-intersection $-n \leq -2$.  Let $k$ be an integer such that $3 \leq k \leq n+1$ and let $a_1 , \ldots, a_k$ be positive integers such that $\sum_{j=1}^{k} \frac{1}{a_j} < k-2$. Choose $k$ distinct fibers $\overline{F_1}, \ldots, \overline{F_k}$ of $h$, and choose $a_i$ distinct points $p_1^{i}, \ldots, p_{a_i}^i$ on $\overline{F_i} \backslash \overline{\sigma}$. Let $S$ be the blow-up of $\mathbb{F}_n$ at $p_j^i$ where $1 \leq i \leq k, 1 \leq j \leq a_i$. Let $\sigma$ be the strict transform of $\overline{\sigma}$, and let $F_i$ be the strict transform of $\overline{F_i}$. Denote the pull-back of a general fiber of $\mathbb{F}_n$ by $F$.

Then, we have the Zariski decomposition of the anticanonical divisor $-K_{S}=P+N$ as follows:
\[P=\frac{n+2-k}{n-\sum{\frac{1}{a_j}}} \sigma + (n+2-k)F + \sum_{i=1}^{k}{\frac{n+2-k}{a_i(n-\sum{\frac{1}{a_j}})} F_i} ,\]
\[N= \left( 2-\frac{n+2-k}{n-\sum\frac{1}{a_j}} \right)\sigma + \sum_{i=1}^k \left(1- \frac{n+2-k}{a_i (n-\sum \frac{1}{a_j})} \right)F_i.\]
Thus, $S$ is a big anticanonical rational surface, and every point $p$ in $\sigma$ is a redundant point because
$$\mult_{p}N \geq  2-\frac{n+2-k}{n-\sum\frac{1}{a_j}}  >1.$$
The construction of $S$ appeared in Section 3 of \cite{TVV10}.
\end{example}

\subsection{Redundant blow-ups of other surfaces}

Throughout this subsection, we simply assume that $k=\C$. Here, we construct smooth projective surfaces with $\kappa(-K)=2,1,$ or $0$ containing redundant points.

\begin{example}
Let $C$ be a smooth projective curve of genus $g \geq 1$, and let $A$ be a divisor of degree $e > 2g-2$ on $C$. Consider the ruled surface $X:=\P(\mathcal{O}_C \oplus \mathcal{O}_C(-A))$ with the ruling $\pi \colon X \to C$. Then, $-K_X$ is big. It is easy to see that the negative part of the Zariski decomposition $-K_X = P+N$ is given by $N=\left(1+\frac{2g-2}{e} \right)C_0$,
where $C_0$ is a section of $\pi$ corresponding to the canonical projection $\mathcal{O}_C(-A) \oplus \mathcal{O}_C \to \mathcal{O}_C(-A)$.  Thus, every point on $C_0$ is redundant.
\end{example}

\begin{example}\label{x22}
Let $S$ be an extremal rational elliptic surface $X_{321}$ in \cite[Theorem 4.1]{MP86}. There is a singular fiber which consists of two smooth rational curves $A$ and $B$ meeting transversally at two points $p$ and $q$.  Let $\pi \colon \widetilde{S} \rightarrow S$ be the blow-up $p$ with the exceptional divisor $E$. Since $\mult_p(A+B)=2$. By \cite[Lemma 4.4]{AL11}, we obtain the Zariski decomposition $-K_{\widetilde{S}} =P+N$, where $P=\frac{1}{2}\pi^{*}(-K_S)$ and $N = \frac{1}{2}(\pi^{-1}_* A+ \pi^{-1}_* B)$. Then, $\kappa(-K_{\widetilde{S}})=1$, and the intersection point $q$ of $\pi^{-1}_* A$ and $\pi^{-1}_* B$ is redundant.
\end{example}

\begin{example}\label{0redex}
Let $S$ be an extremal rational elliptic surface $X_{22}$ in \cite[Theorem 4.1]{MP86}. The elliptic fibration has a singular fiber $C$ which is a cuspidal rational curve and the unique section $D$. Let $p$ be the intersection point of a singular fiber $C$ and the section $D$, and let $\pi \colon \widetilde{S} \to S$ be the blow-up at $p$ with the exceptional divisor $E$. Then, we obtain the Zariski decomposition $-K_{\widetilde{S}} = P + N$, where $P=0$ and $N = \pi^{-1}_*C$. Thus, $\kappa(-K_{\widetilde{S}})=0$, and every point on $N$ is redundant.
\end{example}


\end{document}